\documentclass[review]{elsarticle}

\usepackage{lineno,hyperref}
\modulolinenumbers[5]

\journal{Discrete Applied Mathematics}

\bibliographystyle{elsarticle-num}

\usepackage{graphicx}
\usepackage{algorithm,algpseudocode}
\usepackage{mathrsfs}
\usepackage{amsmath}  
\usepackage{amsthm}
\usepackage{mathtools}
\usepackage{amssymb}
\usepackage{amsfonts} 
\usepackage{subcaption}\captionsetup{compatibility=false}
\usepackage[export]{adjustbox}
\usepackage{bbm}
\usepackage{cite}
\usepackage{url}
\usepackage{color}
\usepackage{comment}
\allowdisplaybreaks[1]

\newtheorem{theorem}{Theorem}
\newtheorem{proposition}{Proposition}
\newtheorem{lemma}{Lemma}
\newtheorem{corollary}{Corollary}

\newcommand{\mtc}{\mathcal}
\newcommand{\mb}{\mathbb}
\newcommand{\mbR}{\mathbb{R}}

\newcommand{\cS}{\mathcal{S}}

\newcommand{\F}{\mathcal{F}}

\newcommand{\CIJ}{\mtc{C}(I,J)}

\newcommand{\proj}{\textrm{proj}}

\newcommand{\cchiL}{\conv(\chi^L)}

\newcommand{\conv}{\textrm{conv}}

\newcommand{\beq}{\begin{equation}}
\newcommand{\eeq}{\end{equation}}
\newcommand{\ba}{\begin{aligned}}
\newcommand{\ea}{\end{aligned}}
\newcommand{\bdm}{\begin{displaymath}}
\newcommand{\edm}{\end{displaymath}}

\DeclarePairedDelimiterX\Set[2]{\lbrace}{\rbrace}{ #1 \,\delimsize| \,\mathopen{} #2 }

\begin{document}
\begin{frontmatter}
\title{Polyhedral Analysis of a Polytope from a Service Center Location Problem with 
a Type of Decision-Dependent Customer Demand}
\tnotetext[mytitlenote]{Polyhedral Analysis for a Service Center Location Problem}

\author[nwaddress]{Fengqiao Luo}
\cortext[mycorrespondingauthor]{Corresponding author}
\ead{fengqiaoluo2014@u.northwestern.edu}
\address[nwaddress]{Northwestern University, 
Department of Industrial Engineering and Management Science, Evanston, IL 60202}

\begin{abstract}
The polytope from a service center location problem with a special
location-dependent demand has been investigated in this paper. 
The location-dependent demand is defined based on a maximum-attraction-principle assumption. 
Along this direction, the intersection polytope of the capacity constraints 
and demand constraints yields a novel polyhedral structure. The structure is investigated
in this paper via a family of valid and facet-defining inequalities that can reflect the change
of demand fulfillment in a neighborhood.
\end{abstract}

\begin{keyword}
valid and facet-defining inequalities,
service center location, decision-dependent demand,
maximum attraction principle.
\end{keyword}

\end{frontmatter}

\section{Introduction}
\label{sec:introd}
We investigate the polytope $\conv(\chi)$, where $\chi$
is the set of points satisfying the following linear and mixed-binary constraints:  
\beq\label{eqn:chi}
\chi=\Set*{[x,y]}{
\begin{array}{ll}
\displaystyle \sum^m_{i=1}x_{ij}\le c_jy_j & \quad\forall j\in[n], \\
\displaystyle \sum^n_{j=1}x_{ij} \le\max_{j\in[n]}d_{ij}y_j & \quad \forall i\in[m], \\
\displaystyle x_{ij}\ge0,y_j\in\{0,1\} & \quad\forall i\in[m],\forall j\in[n]
\end{array}
},
\eeq
where $[m]$ represents the set of indices $\{1,\ldots,m\}$. Note that
$\conv(\chi)$ is indeed a polytope, which is shown in Proposition~\ref{prop:conv(chi)-polytope}.

\subsection{Motivation and background}
The above linear system is motivated from a service center location
problem with standard capacity constraints and additional decision-dependent
constraints. More specifically, we consider a problem of allocating a set of
service centers (facilities) in a region to meet the customer demand from the region. 
Similarly to the traditional facility location problem, the region consists of 
customer sites and candidate locations for the service centers. 
Each customer site has certain demand that
needs to be fulfilled by a service center from its neighborhood.
Especially, we consider the case that the demand of each customer site 
is a value which depends on the location pattern of service centers. 
Suppose there are $m$ customer sites and $n$ candidate locations
for the service centers. Let $c_j$ be the capacity of the service center
at the location $j\in[n]$. Let $d_i(y)$ be the demand from customer site $i$
for a given service-center-location vector $y$. In particular, we consider 
a special type of dependency between the customer demand and the location
vector, which is simple but sensible in depicting customers' behavior in certain
situations. We referred it as the \emph{maximum attraction principle}.
To explain this principle, we assume that when there is just one service center
been allocated which is opened at $j\in[n]$, 
it will attract $d_{ij}$ amount of customer demand from the site $i$.
If there are multiple service centers opened at different locations, the maximum demand
that can be attracted from $i$ is equal to the maximum $d_{ij}$ for $j\in[n]$ that has a 
opened service center. This principle is formally described by the following equation:
\beq\label{eqn:max-attr}
d_i(y)=\left\{
\begin{array}{ll}
0 & \textrm{ if } y_j=0 \text{ for all }j\in[n], \\
d_{ij} & \textrm{ if } y=e_j \;\textrm{ for some } j\in[n], \\
\max_{j\in[n]} d_{ij}y_j & \textrm{ for other cases, } 
\end{array}
\right.
\eeq
where $e_j$ is the $n$-dimensional vector with the $j$-th entry being 1
and other entries being 0.

The maximum attraction principle matches 
with our intuition from practice in certain situations. Note that the principle
has incorporated the case that customers in site $i$ are not willing to
visit any service centers that are not within a certain range from $i$.
In this case, assume $F_i\subseteq[n]$ is the subset of candidate locations
that some customers from site $i$ are willing to visit, and then we can set
$d_{ij}=0$ for every $j\in[n]\setminus F_i$. In particular, $F_i$ can be a subset
of candidate locations in a neighborhood of site $i$, as customers usually 
will not consider visiting a service center that is beyond a certain distance 
from their living place. For a given location vector $y$, 
let $F_i(y)=\Set*{j\in F_i}{y_j=1}$.
The maximum attraction principle further assumes that within the preferable
opened service centers $F_i(y)$, there exists one service center $j^*$ (or multiple centers) that  
is (are) most attractive to the customers from site $i$ in the sense that 
$j^*\in\text{argmax}_{j\in F_i(y)}d_{ij}$, and the presence of multiple service
centers in $F_i(y)$ including $j^*$ attracts the same amount of demand from site $i$
as the presence of just a single service center at $j^*$. This assumption is also consistent
with our intuition in some situations. For example, If all service centers are identical 
in scale and service quality, the most attractive one to site $i$ is likely the one that is closest to $i$. 
Furthermore, customers who are willing to visit service centers with a longer distance are also 
willing to visit the one that is closest to their living place.

Notice that the polyhedral properties of the capacity constraints 
$\sum^m_{i=1}x_{ij}\le c_jy_j$ in \eqref{eqn:chi} 
have been well studied in \citep{1995-wolsey_cap-fac-loc-valid-ineqs}.
However, the demand attraction constraints $\sum^n_{j=1}x_{ij}\le\max_{j\in[n]}d_{ij}y_j$
concerned in this research are novel which brings new features to 
the polyhedral structure when interacting with
the capacity constraints. We focus on investigating the polyhedral structure 
induced by the demand attraction constraints and the capacity constraints.

\subsection{Literature review}
The facility location problem \citep{book-daskin-disc-loc} is one 
of the most fundamental problems investigated in operations research.
In this problem, a decision maker needs to decide locations 
of a limited number of facilities (factories, retail centers, power plants, service centers, etc.), 
and determine coverage of demand from different sites by the located facilities. 
The objective is to minimize the facility setup cost and the cost of production and delivery. 
This problem provides a basic framework to formulate related problems 
in resource allocation \citep{daskin2001-integ-fac-loc-transport-netwk-des}, 
supply chain management \citep{melo2009_fac-loc-supply-chain-managmt-revew}
and logistics \citep{boonmee2017_fac-loc-emerg-hum-logistics,
allen2012_relation-road-transport-fac-loc-log-managmt}, etc.
See \citep{daskin1998-strat-fac-loc-rev} for
a comprehensive review on this topic.

Facility location problems with non-deterministic demand have received 
plentiful investigation in the framework of stochastic optimization and 
robust optimization. In the stochastic optimization
framework, customer demand are independent random parameters with known probability
distributions and the problem can be formulated as
a two-stage stochastic program in which the location vector is in the 
first-stage decision that needs to be made before realization of 
customer demand \citep{1992-dual-stoch-fac-loc,2011-fac-loc-bern-dmd}. 
In a special case when the customer demand rates and service rates of each facility 
are assumed to follow exponential probability distributions,
the problem of minimizing long term average cost can be reformulated as   
a deterministic optimization problem with corresponding Poisson rates
to characterize the demand and service. This setting has been applied
to an allocation problem of ATMs \citep{2002-alg-fac-loc-stoch-demand}.  
The probability distribution of demand can also be used to define 
chance constraints to ensure a required service level under possible
stockout and supply disruption
\citep{2011-rob-fac-loc-bio-terror-attack,gulpinar2013-rob-fac-loc,
daskin2012_fac-loc-rand-disrupt-imp-estim,2017-multi-source-suppl-chance-constr}. 
In the robust optimization framework
for facility location problems,
the information of demand is partially known to the decision maker,
and the goal is to find a robust optimal location vector that optimizes
the objective after a worst-case realization of demand information
\citep{2006-fac-loc-uncert-rev}. 
Following this direction of research, 
Baron and Milner \citep{2010-fac-loc-rob-opt-approach} 
investigated a robust multi-period facility location problem
with box and ellipsoid sets of uncertainty.
Gourtani et. al. \citep{2020-gourtani_dro-two-stage-fac-loc} 
investigated a distributionally-robust two-stage facility location problem
with an ambiguity set defined corresponding to the mean and covariance matrix
of a random parameter for expressing the demand.
Facility location problems with demand uncertainty have been 
studied with a variety of novel application background, which includes
but not limited to medication coverage and delivery under a large-scale bio-terror attack
\citep{2011-rob-fac-loc-bio-terror-attack}, medical equipment (defibrillators) location 
problem to reduce cardiopulmonary resuscitation (CPR) risk \citep{2017-rob-deploy-card-arrest-loc-uncert},
humanitarian relief logistics \citep{2012-two-echelon-stoch-fac-loc}, 
and hazardous waste transportation \citep{2014-rob-fac-loc-hazard-waste-transport}, etc.

In a lot of real world problems, the parameters or the uncertainty of them
can interplay with the decision to be made. This behavior is well observed especially 
in a sequential (multi-stage) decision-making process,
in which information about system parameters are gradually revealed and the decisions made 
up until the current stage can reshape the uncertainty in future 
\citep{grossmann2006_stoch-prog-dec-uncert}. The customer demand
could also depend on the location of facilities or service centers
as revealed in \citep{eppli1996_location-shopping-center,rajagopal2009_behav-urban-shoppers},
which motivates the study of the facility location problem with location-dependent demand. 
As a robustification of it, Basciftci et. al. \citep{basciftci2020-dro-fl-endog-dmd} investigated
a distributionally-robust facility location problem with decision-dependent
ambiguity, where the ambiguity set is defined using disjointed lower and upper bounds on
the mean and variance of the candidate probability distributions for customer demand,
and these bounds are defined as linear functions of the location vector to admit
a tractable MILP reformulation.

Identifying valid and facet-defining inequalities is an essential topic
in understanding the polyhedral structure of a mixed-integer set of points 
\citep{conforti2014_integer-programming,wolsey1999_integer-comb-opt}.
As a by-product, adding some of these inequalities may improve the 
computational performance of solving the corresponding mixed-integer
linear program, as a result of strengthening the formulation 
\citep{pochet1993_lot-sizing-form-valid-ineq,
pochet1995_cap-fac-loc-valid-ineq,
perboli2010_new-valid-ineq-two-echelon-veh-routing,
coelho2014_inventory-routing-with-valid-ineq,
baldacci2009_valid-ineq-fleet-size-mix-veh-routing,
huygens2006_two-edge-hop-constr-netwk-design}.   
Some standard techniques are well developed to construct the 
face- an facet-defining inequalities such as lifting and simultaneous lifting
\citep{2007wolsey_lift-superadd-single-node-flow,
nemhauser2003_lifted-ineq-mip-thy-alg,
atamturk2003_facets-of-mixed-int-knapsack-poly,
gu1999_liftted-cover-ineq-complexity,gu1999_liftted-flow-cover-ineq,
gu1998_liftted-cover-ineq-computing}, 
lift-and-project \citep{au2016_analysis-poly-lift-proj-methd,
kocuk2016_cycle-base-form-valid-ineq-DC-power-transmission,
Luedtke2017_lift-and-proj-cuts-for-convex-minlp,burer2005_solve-lift-proj-relax,
aguilera2004_lift-proj-relax-matching-related,
balas1997_mod-lift-proj-proced}, and Fourier-Motzkin elimination
\citep{dantzig1972_FM-elimination-and-dual,williams1976_FM-elim-int-prog,
schechter1998_integration-polyhedron-appl-of-EM-elim}, etc.
By applying these techniques, different families of inequalities
such as lifted cover inequalities \citep{gu1998_liftted-cover-ineq-computing,
gu1999_liftted-cover-ineq-complexity}, 
lifted flow-cover inequalities \citep{gu1999_liftted-flow-cover-ineq},
and disjunctive inequalities \citep{balas1993_lift-project-cut-plane}, etc. are constructed 
for corresponding basic polytopes that are often building blocks in the formulation
of various mixed-integer linear programming problems in practice. 
In many mixed-integer linear programming problems from the application, 
construction of valid and facet-defining inequalities usually depends on 
ad hoc observations that identify sources leading to fractional solutions [cite references]. 
The key observation for constructing facet-defining inequalities in this paper
is inspired by \citep{wolsey1995_cap-fac-loc-ineq}. 
The work in \citep{wolsey1995_cap-fac-loc-ineq} has constructed and generalized
a family of facet-defining inequalities for the polytope of a standard capacitated facility
location problem. Computational performance with the inequalities added to 
the linear relaxation problems have been investigated in 
\citep{aardal1998_cap-fac-loc-sep-alg-comp,labbe2004_branch-cut-alg-hub-loc}.
In the standard capacitated facility location problem considered in 
\citep{wolsey1995_cap-fac-loc-ineq}, the demand of each client site is 
given as a fixed constant, whereas the customer demand is considered 
to be decision-dependent in this work.

\section{Polyhedral properties of $\conv(\chi)$}
\label{sec:poly-property}
We first show in the following proposition that the convex hull 
$\conv(\chi)$ is indeed polytope.
\begin{proposition}\label{prop:conv(chi)-polytope}
The convex hull $\conv(\chi)$ is a polytope.
\end{proposition}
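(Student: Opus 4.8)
The plan is to realize $\chi$ as a finite union of polytopes and then invoke the classical fact that the convex hull of a finite union of polytopes is again a polytope. First I would fix a binary configuration $\bar{y}\in\{0,1\}^n$ and look at the corresponding slice
$\chi_{\bar{y}}:=\bigl\{[x,\bar{y}] : x\ge 0,\ \sum_{i=1}^{m}x_{ij}\le c_j\bar{y}_j\ \forall j\in[n],\ \sum_{j=1}^{n}x_{ij}\le \max_{j\in[n]}d_{ij}\bar{y}_j\ \forall i\in[m]\bigr\}$.
Because $\bar{y}$ is fixed, every right-hand side is a nonnegative constant, so $\chi_{\bar{y}}$ is a rational polyhedron in the $x$-variables (with the $y$-variables pinned). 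It is also bounded: for every $i,j$ we have $0\le x_{ij}\le \sum_{i'=1}^{m}x_{i'j}\le c_j\bar{y}_j\le \max_{j\in[n]}c_j$, and the $y$-coordinates take the single value $\bar{y}$. Hence each $\chi_{\bar{y}}$ is a polytope. Since any feasible $[x,y]\in\chi$ has $y\in\{0,1\}^n$, we get the identity $\chi=\bigcup_{\bar{y}\in\{0,1\}^n}\chi_{\bar{y}}$, a union of at most $2^n$ polytopes.

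Next I would pass to convex hulls. By the Minkowski--Weyl theorem each polytope $\chi_{\bar{y}}$ equals $\conv(V_{\bar{y}})$ for some finite set $V_{\bar{y}}$ of its vertices. Then
$\conv(\chi)=\conv\bigl(\bigcup_{\bar{y}}\chi_{\bar{y}}\bigr)=\conv\bigl(\bigcup_{\bar{y}}\conv(V_{\bar{y}})\bigr)=\conv\bigl(\bigcup_{\bar{y}}V_{\bar{y}}\bigr)$,
where the last step uses that the convex hull of a union of convex sets equals the convex hull of the union of any generating sets. The set $\bigcup_{\bar{y}}V_{\bar{y}}$ is a finite collection of points, and the convex hull of a finite point set is a polytope; this yields the claim.

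The argument is essentially bookkeeping, so I do not anticipate a genuine obstacle; the only point that deserves an explicit word is the boundedness of each slice $\chi_{\bar{y}}$, which is exactly where the capacity constraints (together with $x\ge 0$) are used to confine every $x_{ij}$ to the interval $[0,\max_{j\in[n]}c_j]$. An alternative, marginally slicker route is to observe that $\chi\subseteq[0,\max_{j\in[n]}c_j]^{mn}\times\{0,1\}^n$ is a bounded mixed-binary set defined by finitely many linear inequalities and then cite the general principle that the convex hull of such a set is a polytope; I would still prefer to spell out the finite union above so that the proof is self-contained.
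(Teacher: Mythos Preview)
Your proof is correct and follows essentially the same approach as the paper: decompose $\chi$ into the finitely many slices $\chi_{\bar y}$ indexed by $\bar y\in\{0,1\}^n$, observe that each slice is a polytope, and conclude that $\conv(\chi)$ is the convex hull of a finite union of polytopes. The paper simply asserts that each slice is a polytope and that the convex hull of a finite union of polytopes is a polytope, whereas you additionally spell out the boundedness argument and the Minkowski--Weyl step, so your write-up is, if anything, more complete than the paper's.
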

\begin{proof}
Let $\chi(\hat{y})$ be the subset of $\chi$ with a given binary
vector $y$ at value $\hat{y}$. Once the variable $y$ is fixed,
the right side of the inequalities $\sum^m_{i=1}x_{ij}\le c_j\hat{y}_j$
and $\sum^n_{j=1}x_{ij}\le\max_{j\in[n]} d_{ij}\hat{y}_j$ becomes
a constant, and hence $\chi(\hat{y})$ is a polytope. Notice that
the convex hull of $\chi$ can be rewritten as 
\beq\label{eqn:conv(chi)-rewritten}
\conv(\chi)=\conv\left(\cup_{\hat{y}\in\{0,1\}^n}\chi(\hat{y})\right).
\eeq
The right side of \eqref{eqn:conv(chi)-rewritten} is a convex hull of
a finite union of polytopes, and hence the result is also a polytope.
\end{proof}
Notice that the representation \eqref{eqn:chi} of $\chi$ involves a nonlinear
term $\max_{j\in[m]}d_{ij}y_j$ of the decision vector $y$, which is inconvenient
for the polyhedral analysis. The nonlinear term can be avoided by reformulating
$\chi$ in a lifted space. The lifted reformulation of $\chi$ is given as follows:
\beq\label{eqn:chi_L}
\chi^L=\Set*{[x,y,q]}{
\begin{array}{ll}
\displaystyle \sum^m_{i=1}x_{ij}\le c_jy_j & \quad\forall j\in[n], \\
\displaystyle \sum^n_{j=1}x_{ij} \le\sum_{j\in[n]}d_{ij}q_{ij} & \quad \forall i\in[m], \\
\displaystyle \sum^n_{j=1}q_{ij}\le 1 & \quad \forall i\in[n], \\
\displaystyle q_{ij}\le y_j\; & \quad  \forall i\in[n],\; \forall j\in[m], \\ 
\displaystyle x_{ij}\ge0,\;q_{ij}\ge 0,\; y_j\in\{0,1\} & \quad\forall i\in[m],\forall j\in[n]
\end{array}
}.
\eeq
\begin{proposition}\label{prop:chi_chiL}
The polytope $\chi$ is the projection of $\chi^L$ on to the $[x,y]$ space,
i.e., $\emph{conv}(\chi)=\emph{proj}_{[x,y]}\emph{conv}(\chi^L)$.
\end{proposition}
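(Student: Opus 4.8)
The plan is to split the claim into two parts: first prove the set-theoretic identity $\proj_{[x,y]}\chi^L=\chi$, and then lift it to the convex hulls by invoking the elementary fact that a linear projection commutes with the convex-hull operation. The second part is essentially automatic once the first is in place, so almost all the work is in the set-level statement.

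For the set-level identity I would condition on the binary vector. Fix $\hat y\in\{0,1\}^n$, let $\chi^L(\hat y)$ be the slice of $\chi^L$ with $y=\hat y$, and recall from the proof of Proposition~\ref{prop:conv(chi)-polytope} that $\chi=\bigcup_{\hat y}\chi(\hat y)$; since also $\proj_{[x,y]}\chi^L=\bigcup_{\hat y}\proj_{[x,y]}\chi^L(\hat y)$, it suffices to show $\proj_{[x,y]}\chi^L(\hat y)=\chi(\hat y)$ for every $\hat y$. The core observation is that, for each customer $i$, the linear program $\max\{\sum_{j\in[n]}d_{ij}q_{ij}\;:\;\sum_{j}q_{ij}\le 1,\ 0\le q_{ij}\le\hat y_j\}$ has optimal value exactly $\max_{j\in[n]}d_{ij}\hat y_j$: the upper bound holds because $q_{ij}=0$ whenever $\hat y_j=0$, so (using $d_{ij}\ge 0$) $\sum_j d_{ij}q_{ij}\le(\max_k d_{ik}\hat y_k)\sum_j q_{ij}\le\max_k d_{ik}\hat y_k$, and the bound is attained by putting all weight on an index in $\text{argmax}_j d_{ij}\hat y_j$. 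Granting this, the inclusion $\proj_{[x,y]}\chi^L(\hat y)\subseteq\chi(\hat y)$ follows at once, because the capacity rows of $\chi^L$ coincide with those of $\chi$ and the demand-attraction row $\sum_j x_{ij}\le\sum_j d_{ij}q_{ij}$ then forces $\sum_j x_{ij}\le\max_j d_{ij}\hat y_j$. For the reverse inclusion, given $[x,\hat y]\in\chi(\hat y)$ I would build a witness $q$ by setting, for each $i$ with $\max_j d_{ij}\hat y_j>0$, $q_{ij_i^*}=1$ for a fixed $j_i^*\in\text{argmax}_j d_{ij}\hat y_j$ and $q_{ij}=0$ otherwise, and $q_i=0$ when that maximum is zero (in which case $\sum_j x_{ij}\le 0$ forces $x_{ij}=0$ for all $j$); in both cases $[x,\hat y,q]\in\chi^L$.

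Having established $\proj_{[x,y]}\chi^L=\chi$, I would finish by recalling that for any set $S$ and any linear map $\pi$ one has $\pi(\conv S)=\conv(\pi(S))$ --- the inclusion $\supseteq$ because $\pi(\conv S)$ is convex and contains $\pi(S)$, and $\subseteq$ because $\pi$ sends a convex combination of points of $S$ to the corresponding convex combination of their images. Applying this with $S=\chi^L$ and $\pi=\proj_{[x,y]}$ gives $\proj_{[x,y]}\conv(\chi^L)=\conv(\proj_{[x,y]}\chi^L)=\conv(\chi)$, which is the assertion; moreover $\conv(\chi^L)$ is again a polytope by the union-of-polytopes argument of Proposition~\ref{prop:conv(chi)-polytope}. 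I do not anticipate a genuine obstacle: the only step needing care is the evaluation of the auxiliary linear program in the $q$-variables, i.e.\ verifying that the constraints $\sum_j q_{ij}\le 1$, $q_{ij}\le y_j$, $q_{ij}\ge 0$ faithfully linearize $\max_j d_{ij}y_j$ for binary $y$, which hinges on the nonnegativity of the demands $d_{ij}$.
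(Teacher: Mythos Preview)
Your proposal is correct and mirrors the paper's own proof: both condition on the binary vector $\hat y$, establish $\proj_{[x,y]}\chi^L(\hat y)=\chi(\hat y)$ by bounding $\sum_j d_{ij}q_{ij}\le\max_j d_{ij}\hat y_j$ for one inclusion and constructing a witness $q$ concentrated on an argmax index for the other, and then pass to convex hulls. You are slightly more explicit than the paper in invoking the commutation of linear projection with $\conv(\cdot)$ and in flagging the reliance on $d_{ij}\ge 0$, both of which the paper leaves implicit.
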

\begin{proof}
Notice that both $\chi$ and $\chi^L$ can be represented as
the convex hull of a finite number of sub-polytopes as follows:
\beq
\ba
&\conv(\chi)=\conv\left(\cup_{\hat{y}\in\{0,1\}^n}\chi(\hat{y})\right), \\
&\conv(\chi^L)=\conv\left(\cup_{\hat{y}\in\{0,1\}^n}\chi^L(\hat{y})\right).
\ea
\eeq
So it suffices to show that $\chi(\hat{y})=\proj_x \chi^L(\hat{y})$.
First, consider any $[\hat{x},\hat{y},\hat{q}]\in\chi^L(\hat{y})$.
To show $[\hat{x},\hat{y}]\in\chi(\hat{y})$, it suffices to verify that
$\sum^n_{j=1}\hat{x}_{ij} \le\max_{j\in[n]}d_{ij}\hat{y}_j$ holds for
every $i\in[n]$. Indeed, if we let $J_i=\Set*{j\in[m]}{\hat{y}_j=1}$,
we have
\beq
\ba
&\sum^n_{j=1}\hat{x}_{ij}\le\sum_{j\in[n]}d_{ij}\hat{q}_{ij}
=\sum_{j\in J_i}d_{ij}\hat{q}_{ij}+\sum_{j\in[m]\setminus J_i}d_{ij}\hat{q}_{ij} \\
&\le\sum_{j\in J_i}d_{ij}\hat{q}_{ij}+\sum_{j\in[m]\setminus J_i}d_{ij}\hat{y}_j 
= \sum_{j\in J_i}d_{ij}\hat{q}_{ij}\\
&\le\Big(\max_{j\in J_i}d_{ij}\Big) \sum_{k\in J_i}\hat{q}_{ik} 
\le \max_{j\in J_i}d_{ij}=\max_{j\in[n]}d_{ij}\hat{y}_j.
\ea
\eeq
Therefore, we have shown that $\proj_x \chi^L(\hat{y})\subseteq\chi(\hat{y})$.
Second, consider any $[\hat{x},\hat{y}]\in\chi(\hat{y})$. We need to show that
there exists a $\hat{q}$ such that $[\hat{x},\hat{y},\hat{q}]\in\chi^L(\hat{y})$.
We can construct a $\hat{q}$ and the as follows: 
Let $k_i\in\text{argmax}_{j\in J_i}d_{ij}$. For any $i\in[m]$, let $\hat{q}_{ik_i}=1$
and $\hat{q}_{ij}=0$ for all $j\in[m]\setminus\{k_i\}$. Clearly the constraints
$\sum^n_{j=1}\hat{q}_{ij}\le 1$ and $\hat{q}_{ij}\le\hat{y}_j$ are satisfied for 
each $i\in[m]$ by the way of constructing $\hat{q}$. Furthermore, we have
\beq
\sum_{j\in[n]}d_{ij}\hat{q}_{ij}=d_{i,k_i}=\max_{j\in J_i}d_{ij}=\max_{j\in[n]}d_{ij}\hat{y}_{j}
\ge\sum^n_{j=1}\hat{x}_{ij},
\eeq
which shows that the constraint $\sum^n_{j=1}\hat{x}_{ij}\le\sum_{j\in[n]}d_{ij}\hat{q}_{ij}$
is satisfied, and hence $\chi(\hat{y})\subseteq\proj_x\chi^L(\hat{y})$.
\end{proof}
Proposition~\ref{prop:chi_chiL} has revealed a type of equivalence between
$\chi$ and $\chi^L$. We will hence focus on deriving valid and facet-defining 
inequalities for $\chi^L$. As we will see later, these inequalities do not involve the 
variables $q$, and hence they are also valid for $\chi$.

To give an insight on how the valid and facet-defining inequalities are constructed, 
we consider the quantity $\sum_{i\in I}\sum_{j\in J}x_{ij}$ for any subsets 
$I\subseteq[m]$ and $J\subseteq[n]$. 
The capacity and demand attraction constraints
imply that $\sum_{i\in I}\sum_{j\in J}x_{ij}\le\sum_{j\in J}c_jy_j$
and $\sum_{i\in I}\sum_{j\in J}x_{ij}\le\sum_{i\in I}\max_{j\in[n]}d_{ij}y_j$.
The upper bound of the quantity 
$\sum_{i\in I}\sum_{j\in J}x_{ij}$ depends
on which constraint is more restrictive. Figure~\ref{fig:dmd-cap-compete}
illustrates a critical situation that if any one of the service centers in $J$
is not opened, the total attracted demand is greater than the total capacity.
While if all service centers in $J$ are opened, the total attracted demand is
less than the total capacity. Based on this critical situation, we can derive a
facet-defining valid inequality $\conv(\chi^L)$ under some mild conditions. 
The theoretical results on the polyhedral properties of $\conv(\chi^L)$ are 
given by Theorem~\ref{thm:valid-ineq-Z} and Theorem~\ref{thm:facet-critical-IJ}.

\begin{figure}
\centering
\includegraphics[trim=1cm 17.6cm 4cm 4cm,width=0.9\textwidth]{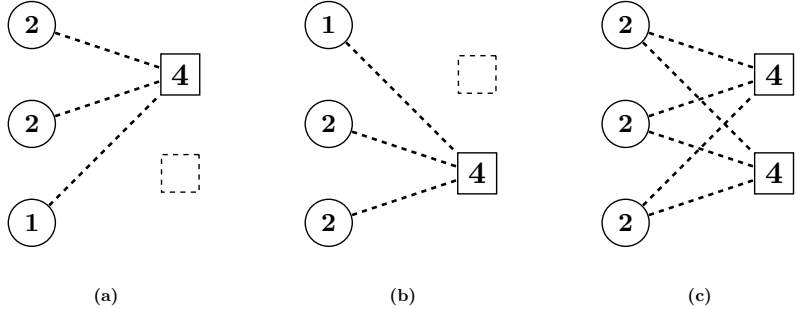}
\caption{\footnotesize An illustration of the competition between the capacity constraints and the demand attraction constraints.
In all subfigures, three customer sites are represented as circles which are denoted as S1, S2, S3 (from top to bottom) 
and two candidate service center locations are represented as squares which are denoted as F1, F2 (from top to bottom).
The number in a customer site (circle) indicates the attracted demand, and the number in a service center (square)
indicates the capacity. In case (a) only F1 is opened which attracts 2 units of demand from S1 and S2 respectively
and 1 unit from S3. In this case the total attracted demand is greater than the total capacity. In case (b) only F2 is opened
and the situation is similar to (a). In case (c) two service centers are opened, which attracts 2 units of demand from 
all customer sites. The total attracted demand is less than the total capacity.
}\label{fig:dmd-cap-compete}
\end{figure}

\subsection{Technical lemmas for proving facetness}
\label{sec:lemmas}
We provide a sequence of technical lemmas that will be used to prove the 
facet-extension results given in Theorem~\ref{thm:facet-extension}.
\begin{lemma}\label{lem:n+1->n}
Let $n$ be a positive integer. Then there exist $n$ linearly independent vectors in
any $n+1$ affinely independent vectors.
\end{lemma}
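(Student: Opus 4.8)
The plan is to translate the hypothesis of affine independence into a lower bound on the dimension of the linear span of the given vectors, and then invoke the standard fact that every finite spanning set of a vector space contains a basis.

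Concretely, let $v_0,v_1,\ldots,v_n$ be $n+1$ affinely independent vectors and set $W=\mathrm{span}\{v_0,v_1,\ldots,v_n\}$. The first step is to observe that $\dim W\ge n$. By the definition of affine independence, the $n$ difference vectors $v_1-v_0,\ldots,v_n-v_0$ are linearly independent, and each of them lies in $W$; hence $W$ contains an $n$-dimensional subspace. On the other hand, $W$ is by construction spanned by the $n+1$ vectors $v_0,\ldots,v_n$, so $\dim W\le n+1$. Therefore $\dim W\in\{n,n+1\}$. The second step is to extract a basis of $W$ from the spanning family $\{v_0,\ldots,v_n\}$: since this family spans $W$, it contains a basis of $W$, consisting of $\dim W\ge n$ of the vectors $v_0,\ldots,v_n$. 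Choosing any $n$ of these basis vectors produces $n$ linearly independent vectors drawn from the original collection of $n+1$, which is exactly the assertion of the lemma.

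I do not expect a genuine obstacle here; the only points that require care are the precise reading of ``affinely independent'' (namely that the shifted vectors $v_i-v_0$ are linearly independent, which is what forces $\dim W\ge n$) and the clean invocation of the fact that a spanning set contains a basis. An essentially equivalent alternative, which I would mention only as a remark if at all, is to append a coordinate equal to $1$ to each $v_i$: the resulting $n+1$ vectors are linearly independent by affine independence, so the matrix with columns $v_0,\ldots,v_n$ has rank at least $n$ because deleting one row lowers the rank by at most one, and a rank-$\ge n$ matrix with $n+1$ columns has $n$ linearly independent columns.
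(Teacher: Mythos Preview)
Your proof is correct and follows essentially the same approach as the paper: both arguments hinge on the observation that the difference vectors $v_i-v_0$ lie in $W=\mathrm{span}\{v_0,\ldots,v_n\}$ and are linearly independent, forcing $\dim W\ge n$, whence $n$ linearly independent vectors can be extracted from the spanning family. The only cosmetic difference is that the paper frames this as a proof by contradiction (assuming $\dim W\le n-1$ and deriving a clash with affine independence), whereas you argue directly.
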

\begin{proof}
Suppose $S=\{v^k\}^n_{k=0}$ are $n+1$ affinely independent vectors.
If $\textrm{span}(S)\ge n$, then there must exist $n$ linearly independent vectors in $S$.
We now suppose that $\textrm{span}(S)\le n-1$. Specifically, suppose $\textrm{span}(S)=m$,
with $m\le n-1$. Then there exist $m$ linearly independent vectors in $S$. We assume that
$\{v^k\}^n_{k=n-m+1}$ are linearly independent without loss of generality.
It follows that $v^i\in\textrm{span}\{v^k\}^n_{k=n-m+1},\;\forall i\in\{0,1,\ldots,n-m\}$.  
Therefore, we have $\textrm{span}\{v^k-v^0\}^n_{k=1} \subseteq \textrm{span}\{v^k\}^n_{k=n-m+1}$,
and hence $\dim \{v^k-v^0\}^n_{k=1}\le m$,
which contradicts to that $\{v^k-v^0\}^n_{k=1}$ are linearly independent.
\end{proof}

\begin{lemma}\label{lem:aff-indpt}
Let $\{v_i\}^k_{i=1}$ be affinely independent vectors in $\mathbb{R}^n$ and $\{u_i\}^m_{i=1}$ be linearly
independent vectors in $\mathbb{R}^n$. If $\emph{span}\{u_i\}^m_{i=1} \cap \emph{span} \{v_i\}^k_{i=1}=\{0\}$,
then $\{v_i\}^k_{i=1}\cup \{u_i\}^m_{i=1}$ are affinely independent vectors in $\mathbb{R}^n$. \newline
\end{lemma}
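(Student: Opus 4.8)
The plan is to argue directly from the algebraic characterization of affine independence rather than manipulating difference vectors by hand. Recall that a finite family $\{w_\ell\}^p_{\ell=1}$ in $\mbR^n$ is affinely independent if and only if the only scalars $\lambda_1,\dots,\lambda_p$ satisfying both $\sum^p_{\ell=1}\lambda_\ell w_\ell=0$ and $\sum^p_{\ell=1}\lambda_\ell=0$ are $\lambda_1=\cdots=\lambda_p=0$. So I would begin by taking arbitrary scalars $\beta_1,\dots,\beta_k$ and $\gamma_1,\dots,\gamma_m$ with
\[
\sum^k_{i=1}\beta_i v_i+\sum^m_{j=1}\gamma_j u_j=0
\qquad\text{and}\qquad
\sum^k_{i=1}\beta_i+\sum^m_{j=1}\gamma_j=0,
\]
and the goal is to conclude that all the $\beta_i$ and all the $\gamma_j$ vanish.

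The key step is a splitting argument using the hypothesis on the intersection of spans. Rewriting the first displayed equation as $\sum^k_{i=1}\beta_i v_i=-\sum^m_{j=1}\gamma_j u_j$ exhibits a vector lying simultaneously in $\textrm{span}\{v_i\}^k_{i=1}$ and in $\textrm{span}\{u_j\}^m_{j=1}$; since these two subspaces meet only in $\{0\}$, that vector must be $0$. Hence $\sum^m_{j=1}\gamma_j u_j=0$, and linear independence of $\{u_j\}^m_{j=1}$ forces $\gamma_1=\cdots=\gamma_m=0$. Substituting this back, the second displayed equation collapses to $\sum^k_{i=1}\beta_i=0$ while the first gives $\sum^k_{i=1}\beta_i v_i=0$; these are exactly the two conditions in the characterization above applied to the family $\{v_i\}^k_{i=1}$, so affine independence of that family yields $\beta_1=\cdots=\beta_k=0$. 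Therefore $\{v_i\}^k_{i=1}\cup\{u_j\}^m_{j=1}$ is affinely independent.

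I do not expect a serious obstacle here; the only point that needs care is resisting the temptation to conclude $\beta_i=0$ from $\sum_i\beta_i v_i=0$ alone, since the $v_i$ are merely affinely (not necessarily linearly) independent — one must keep the companion relation $\sum_i\beta_i=0$ in play and invoke affine independence in its proper two-equation form. An alternative route would be to pass to the difference vectors $v_i-v_1$ (for $i\ge 2$) together with the $u_j$ and argue linear independence of that set, possibly appealing to the reasoning behind Lemma~\ref{lem:n+1->n}, but the direct scalar argument above is the cleanest and avoids singling out a base point.
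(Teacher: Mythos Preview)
Your argument is correct. It differs from the paper's proof in the characterization of affine independence you work with: the paper fixes $v_1$ as a base point and shows directly that the difference vectors $\{v_i-v_1\}^k_{i=2}\cup\{u_j-v_1\}^m_{j=1}$ are linearly independent, splitting into the cases ``all $b_j=0$'' versus ``some $b_j\neq 0$'' and reaching a contradiction in the latter via the span-intersection hypothesis. You instead use the equivalent two-equation test ($\sum\lambda_\ell w_\ell=0$ together with $\sum\lambda_\ell=0$), which lets the span-intersection hypothesis do all the work in one step and avoids both the choice of base point and the case split. Your closing remark about the difference-vector route is exactly the paper's approach; what you gain with your version is a cleaner, symmetric argument, while the paper's version has the minor advantage of staying within the linear-independence framework used elsewhere (e.g.\ Lemma~\ref{lem:n+1->n}).
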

\begin{proof}
Consider any coefficients $\{a_i\}^k_{i=2}$ and $\{b_i\}^m_{i=1}$ that satisfy the following equation
\begin{equation}\label{eqn:sum_av_bu}
\sum^k_{i=2}a_i(v_i-v_1)+\sum^m_{j=1}b_j(u_j-v_1)=0.
\end{equation}
If $b_j=0\;\;\forall j\in[m]$, then \eqref{eqn:sum_av_bu} implies $\sum^k_{i=2}a_i(v_i-v_1)=0$,
and hence $a_i=0\;\;\forall i\in \{2,\ldots,k\}$ due to the affinely independence of $\{v_i\}^k_{i=1}$.
Now let $J=\{j\in[m]:\;b_j\neq 0\}$ and assume $|J|\ge 1$, then \eqref{eqn:sum_av_bu} becomes
\begin{equation}
\sum_{j\in J}b_ju_j = \sum_{j\in J}b_jv_1-\sum^k_{i=2}a_i(v_i-v_1).
\end{equation}
By linearly independence, we have $\sum_{j\in J}b_j u_j\neq 0$. Since \newline 
$\textrm{span}\{u_i\}^m_{i=1} \cap \textrm{span}\{v_i\}^k_{i=1}=\{0\}$,
we must have $\sum_{j\in J}b_ju_j= 0$, which leads to a contradiction.
Therefore, we have proved that  $a_i=0\;\;\forall i\in \{2,\ldots,k\}$ and $b_j=0\;\;\forall j\in[m]$,
which shows that $\{v_i\}^k_{i=1}\cup \{u_i\}^m_{i=1}$ are affinely independent vectors. 
\end{proof}

\begin{lemma}\label{lem:u+ev_ind}
Let $l,m,n$ be three non-negative integers such that $m\ge \emph{max}\{n,l\}$ and $n>0$. 
Let $\{v_k\}^n_{k=1}$, 
$\{v^{\prime}_k\}^l_{k=1}$ and $\{u_k\}^n_{k=1}$ be three sets of vectors in $\mathbb{R}^m$.
The following properties hold.
\newline
\emph{(a)} If $\{v_k\}^n_{k=1}$ and $\{v^{\prime}_k\}^l_{k=1}$ are $n+l$ linearly independent vectors, 
then for any $r>0$ there exists an $0<\epsilon<r$ such that  
$\{u_k+\epsilon v_k\}^n_{k=1}$ and $\{v^{\prime}_k\}^l_{k=1}$ 
are also $n+l$ linearly independent vectors. 
\newline
\emph{(b)} If $\{v_k\}^n_{k=1}$ are $n$ affinely independent vectors, 
$\{v^{\prime}_k\}^l_{k=1}$ are $l$ linearly independent vectors,
$\emph{span}\{v^{\prime}_k:\forall k\in[l]\}\subseteq\emph{span}\{u_k:\forall k\in[n]\}$
and $\emph{span}\{v_k:\forall k\in[n]\}\cap\emph{span}\{u_k:\forall k\in[n]\}=\{0\}$,
then for any $r>0$ there exists an $0<\epsilon<r$ such that   
$\{u_k+\epsilon v_k\}^n_{k=1}$ and $\{v^{\prime}_k\}^l_{k=1}$ 
are $n+l$ affinely independent vectors.
\end{lemma}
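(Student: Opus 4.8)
\medskip
\noindent\textit{Proof plan.} Both parts are instances of the same \emph{generic-perturbation} phenomenon: perturbing a family of vectors in a sufficiently transverse direction preserves linear (respectively affine) independence, and this can be certified by exhibiting one minor which, as a function of $\epsilon$, is a polynomial that is not identically zero and hence vanishes at only finitely many points. For part~(a) I would argue as follows. The hypothesis already forces $n+l\le m$, since $\{v_k\}_{k=1}^n\cup\{v'_k\}_{k=1}^l$ are $n+l$ linearly independent vectors of $\mathbb{R}^m$. Form the $m\times(n+l)$ matrix $M(\epsilon)$ whose columns are $u_1+\epsilon v_1,\dots,u_n+\epsilon v_n,v'_1,\dots,v'_l$. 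For every choice $R$ of $n+l$ rows, the maximal minor $p_R(\epsilon)$ of $M(\epsilon)$ on those rows is a polynomial in $\epsilon$ of degree at most $n$, and its coefficient of $\epsilon^n$ is exactly the minor on $R$ of the matrix with columns $v_1,\dots,v_n,v'_1,\dots,v'_l$. Since these $n+l$ columns are linearly independent, some $R$ makes that coefficient nonzero; then $p_R$ is a nonzero polynomial of degree exactly $n$, so it has at most $n$ real roots. Hence every interval $(0,r)$ contains some $\epsilon$ (in fact all but finitely many of its points) with $p_R(\epsilon)\neq0$, and for such $\epsilon$ the matrix $M(\epsilon)$ has full column rank, i.e.\ $\{u_k+\epsilon v_k\}_{k=1}^n$ and $\{v'_k\}_{k=1}^l$ are together linearly independent. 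This proves~(a).

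For part~(b) I would reduce to part~(a) together with Lemma~\ref{lem:aff-indpt}, in three steps. \textbf{Step 1:} apply part~(a) with $l$ replaced by $0$ to the base vectors $\{u_k-u_1\}_{k=2}^n$ and the perturbation vectors $\{v_k-v_1\}_{k=2}^n$; the latter are $n-1$ linearly independent vectors because $\{v_k\}_{k=1}^n$ are affinely independent. This produces a cofinite set of $\epsilon$ for which $\{(u_k-u_1)+\epsilon(v_k-v_1)\}_{k=2}^n$ are linearly independent, equivalently for which $\{u_k+\epsilon v_k\}_{k=1}^n$ are affinely independent (vacuous when $n=1$). \textbf{Step 2:} show that for every $\epsilon\neq0$ one has $\mathrm{span}\{v'_k\}_{k=1}^l\cap\mathrm{span}\{u_k+\epsilon v_k\}_{k=1}^n=\{0\}$. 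If $w$ lies in this intersection, write $w=\sum_k a_k u_k+\epsilon\sum_k a_k v_k$; since $w\in\mathrm{span}\{v'_k\}\subseteq\mathrm{span}\{u_k\}$ and $\sum_k a_k u_k\in\mathrm{span}\{u_k\}$, the term $\epsilon\sum_k a_k v_k$ lies in $\mathrm{span}\{u_k\}$, so $\sum_k a_k v_k=0$ by $\mathrm{span}\{v_k\}\cap\mathrm{span}\{u_k\}=\{0\}$ and $\epsilon\neq0$; then $w=\sum_k a_k u_k$ also lies in $\mathrm{span}\{v'_k\}$, and the affine independence of $\{v_k\}$ should then force $w=0$ --- justifying this last implication is, I expect, the crux of the whole proof. \textbf{Step 3:} for $\epsilon\neq0$ in the cofinite set of Step~1, Lemma~\ref{lem:aff-indpt} applies with the affinely independent set $\{u_k+\epsilon v_k\}_{k=1}^n$ and the linearly independent set $\{v'_k\}_{k=1}^l$, whose spans meet only at $0$ by Step~2; it yields that their union, $n+l$ vectors in all, is affinely independent. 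Choosing $\epsilon\in(0,r)$ inside that cofinite set completes~(b). (When $l=0$, Steps~2--3 are unnecessary and (b) is just Step~1.)

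The routine parts are the degree-and-root bookkeeping in~(a) and the check that Lemma~\ref{lem:aff-indpt} applies in Step~3; the delicate part is Step~2, the transversality of the perturbed family. One must reconcile the two span hypotheses $\mathrm{span}\{v'_k\}\subseteq\mathrm{span}\{u_k\}$ and $\mathrm{span}\{v_k\}\cap\mathrm{span}\{u_k\}=\{0\}$ with the fact that $\{v_k\}$ is assumed only affinely (not linearly) independent, so the implication ``$\sum_k a_k v_k=0$ together with $w=\sum_k a_k u_k\in\mathrm{span}\{v'_k\}$ imply $w=0$'' has to be handled with care. An alternative to Steps~1--3 is to recast affine independence in $\mathbb{R}^m$ as linear independence in $\mathbb{R}^{m+1}$ via $w\mapsto(w,1)$ and run the minor-polynomial argument of~(a) in that space; but because $\{v_k\}$ is only affinely independent the degree-$n$ coefficient there need not survive, so one would be forced to argue with lower-order terms, which is why routing~(b) through Lemma~\ref{lem:aff-indpt} seems preferable.
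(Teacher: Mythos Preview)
Your treatment of part~(a) is essentially the paper's: both form the column matrix $B(\epsilon)=[u_1+\epsilon v_1,\dots,u_n+\epsilon v_n,v'_1,\dots,v'_l]$ and argue that a suitable determinant is a polynomial in $\epsilon$ whose top-degree coefficient is governed by $A=[v_1,\dots,v_n,v'_1,\dots,v'_l]$ and hence nonzero. The only difference is cosmetic: the paper uses the Gram determinant $\det B(\epsilon)^\top B(\epsilon)$ (degree $2n$, leading coefficient $\det A^\top A$) rather than a single maximal minor. For part~(b) your three-step plan also coincides with the paper's. The paper proves your Step~1 by lifting to $\mathbb{R}^{m+1}$, writing $[\epsilon,u_k^\top+\epsilon v_k^\top]=[0,u_k^\top]+\epsilon[1,v_k^\top]$ and invoking~(a); your reduction via the differences $\{(u_k-u_1)+\epsilon(v_k-v_1)\}_{k\ge2}$ is an equivalent variant. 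Step~3 is the same appeal to Lemma~\ref{lem:aff-indpt}. For Step~2 the paper argues exactly as you outline: from $\sum_k\alpha_k(u_k+\epsilon v_k)=\sum_k\beta_kv'_k$ deduce $\epsilon\sum_k\alpha_k v_k\in\textrm{span}\{u_k\}$, hence $\sum_k\alpha_k v_k=0$, and then conclude $\alpha_k=0$ for all $k$.

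You are right to single out that last implication as the crux, and the paper handles it no more carefully than you do: it simply asserts ``$\sum_k\alpha_k v_k=0$ and hence $\alpha_k=0$ for all $k$'', which requires \emph{linear} independence of the $v_k$, not the assumed affine independence. This is a genuine gap, and in fact Step~2 --- and part~(b) as stated --- is false. Take $m=n=2$, $l=1$, $v_1=(0,0)$, $v_2=(1,0)$, $u_1=u_2=v'_1=(0,1)$. All four hypotheses of (b) hold, yet for every $\epsilon$ one has $u_1+\epsilon v_1=(0,1)=v'_1$, so the three vectors $u_1+\epsilon v_1$, $u_2+\epsilon v_2$, $v'_1$ can never be affinely independent, and correspondingly $\textrm{span}\{u_k+\epsilon v_k\}\cap\textrm{span}\{v'_1\}=\mathbb{R}(0,1)\neq\{0\}$. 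Thus neither your plan nor the paper's argument can be completed without an extra hypothesis --- for instance, strengthening to linear independence of the $v_k$, under which the paper's Step~2 argument becomes correct verbatim.
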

\begin{proof}
(a) Note that the set $\{v_k\}^n_{k=1}\cup\{v^{\prime}_k\}^l_{k=1}$ contains $n+l$ linearly independent vectors
if and only if $\det A^{\top}A\neq 0$, where $A=[v_1,\ldots,v_n,v^{\prime}_1,\ldots,v^{\prime}_l]$. 
It amounts to show that there exists an $0<\epsilon<r$ such that $\det B^{\top}(\epsilon)B(\epsilon)\neq 0$, 
where $B(\epsilon)=[u_1+\epsilon v_1,\ldots,u_n+\epsilon v_n,v^{\prime}_1,\ldots,v^{\prime}_l]$.
We prove it by contradiction. Assume that $\det B^{\top}(\epsilon)B(\epsilon)= 0$ for all $0<\epsilon<r$.
Note that since $\det B^{\top}(\epsilon)B(\epsilon)$ is a polynomial of $\epsilon$ on $\mathbb{R}$,
this hypothesis implies that the polynomial $\det B^{\top}(\epsilon)B(\epsilon)$ has infinite number of real roots.
However, only zero polynomial has infinitely many real roots. If we can show that $\det B^{\top}(\epsilon)B(\epsilon)$ 
is not a zero polynomial, we deduce a contradiction. The polynomial  $\det B^{\top}(x)B(x)$
can be rewritten as
\begin{equation}
\begin{aligned}
\det B^{\top}(x)B(x) &= \det \big([u^{\top}_1+xv^{\top}_1,\ldots,u^{\top}_n+xv^{\top}_n,v^{\prime T}_1,\ldots,v^{\prime T}_l]^{\top}\\
&\qquad\quad [u_1+xv_1,\ldots,u_n+xv_n,v^{\prime }_1,\ldots,v^{\prime}_l]\big) \\
&=(\det A^{\top}A)x^{2n} + f(x),
\end{aligned}
\end{equation}  
where $f(x)$ is a polynomial of $x$ with degree at most $2n-1$. Therefore we have
\begin{displaymath}
\lim_{x\to\infty}\frac{\det B^{\top}(x)B(x)}{x^{2n}} = \det A^{\top}A\neq 0,
\end{displaymath} 
which shows that $\det B^{\top}(x)B(x)$ is not a zero polynomial.

(b) We first make the following claim.
\emph{Claim}: For any $r>0$ there exists an $0<\epsilon<r$ 
such that the set $\{u_k+\epsilon v_k\}^n_{k=1}$
contains $n$ affinely independent vectors. 
\emph{Proof of Claim}: Note that the vectors $\{v_k\}^n_{k=1}$ 
are affinely independent if and only if $\{[1,v^{\top}_k]\}^n_{k=1}$
are linearly independent. 
Since by assumption we have $\textrm{span}\{v_k:\forall k\in[n]\}\cap\textrm{span}\{u_k:\forall k\in[n]\}=\{0\}$,
it amounts to show that there exists an $0<\epsilon<r$ such that 
$\{[1,u^{\top}_k+\epsilon v^{\top}_k]\}^n_{k=1}$ 
are linearly independent, which is equivalent to
that $\{[\epsilon,u^{\top}_k+\epsilon v^{\top}_k]\}^n_{k=1}$ are linearly independent.
Since we have $[\epsilon,u^{\top}_k+\epsilon v^{\top}_k]=[0,u^{\top}_k]+\epsilon[1,v^{\top}_k]$,
we can apply the result of Part (a) to conclude the proof the claim.

Suppose $\{u_k+\epsilon v_k\}^n_{k=1}$ are $n$ affinely independent vectors. We now show
that $\textrm{span}\{u_k+\epsilon v_k:\forall k\in[n]\}\cap\textrm{span}\{v^{\prime}_k:\forall k\in[l]\}=\{0\}$.
Then by Lemma~\ref{lem:aff-indpt}, we can deduce that 
$\{u_k+\epsilon v_k\}^n_{k=1}$ and $\{v^{\prime}_k\}^l_{k=1}$ 
are $n+l$ affinely independent vectors.
Assume that $\textrm{span}\{u_k+\epsilon v_k:\;\forall k\in[n]\}\cap\textrm{span}\{v^{\prime}_k:\;\forall k\in[l]\}\neq\{0\}$.
Then there exist coefficients $\{\alpha_k\}^n_{k=1}$ and $\{\beta_k\}^l_{k=1}$ which are not all zero, such that
$\sum^n_{k=1}\alpha_k(u_k+\epsilon v_k)=\sum^l_{k=1}\beta_kv^{\prime}_k$,
which implies that $\epsilon\sum^n_{k=1}\alpha_kv_k=\sum^l_{k=1}\beta_kv^{\prime}_k-\sum^n_{k=1}\alpha_ku_k$.
Since $\sum^n_{k=1}\alpha_kv_k\in\textrm{span}\{v_k:\forall k\in[n]\}$ and 
$\sum^l_{k=1}\beta_kv^{\prime}_k-\sum^n_{k=1}\alpha_ku_k\in\textrm{span}\{u_k:\forall k\in[n]\}$.
But $\textrm{span}\{v_k:\forall k\in[n]\}\cap\textrm{span}\{u_k:\forall k\in[n]\}=\{0\}$ holds by the given hypothesis.
It follows that $\sum^n_{k=1}\alpha_kv_k=0$ and hence $\alpha_k=0\;\;\forall k\in[n]$ and $\beta_k=0\;\;\forall k\in[l]$,
which leads to a contradiction. 
\end{proof}

\begin{lemma}\label{lem:S^l_W_aff_ind}
\emph{(a)} Let $n$ be a positive integer. The following set of vectors are affinely independent:
$U=\{e^n_i-e^n_{i+1}: \forall i\in[n]\}$ with the convention $n+1\to 1$,
where $e^n_i$ is a $n$-dimensional vector with the $i$-th entry being 1 and 
other entries being 0.
\newline
\emph{(b)} Let $\{n_i\}^k_{i=1}$ be $k$ positive integers with $n_i\ge 2\;\;\forall i\in[k]$. 
Consider the following sets of vectors in $\mathbb{R}^N$ with $N=\sum^k_{i=1}n_i$:
\begin{displaymath}
\begin{aligned}
&U^l=\{[0^{n_1},\ldots,0^{n_{l-1}},e^{n_l}_i-e^{n_l}_{i+1},0^{n_{l+1}},\ldots,0^{n_k}]:\;\forall i\in[n_l-1]\} 
\quad \forall l\in[k],\\
&V = \{[0^{n_1},\ldots,0^{n_{i-1}},e^{n_i}_1,-e^{n_{i+1}}_1,0^{n_{i+2}}\ldots,0^{n_k}]:\;\forall i\in[k]  \},
\end{aligned}
\end{displaymath}
with the convention $k+1\to 1$. The $N$ vectors in $\left(\cup^k_{l=1}U^l\right)\cup V$ are affinely independent.
\end{lemma}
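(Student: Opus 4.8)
The plan is to prove both parts with the standard criterion for affine independence: a finite set of vectors $\{w_t\}$ is affinely independent if and only if the only scalars $\{\gamma_t\}$ satisfying $\sum_t \gamma_t w_t = 0$ together with $\sum_t \gamma_t = 0$ are the trivial ones. In each part I would write down such a hypothetical dependence and read off its coordinates block by block.

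For part (a), expand $\sum_{i\in[n]}\lambda_i(e^n_i - e^n_{i+1})$ and collect, for each $i$, the coefficient of $e^n_i$: it appears with weight $+\lambda_i$ in the $i$-th summand and with weight $-\lambda_{i-1}$ in the $(i-1)$-st summand, indices taken modulo $n$. Vanishing of the sum therefore forces $\lambda_i = \lambda_{i-1}$ for every $i$, so all $\lambda_i$ coincide; combined with $\sum_{i}\lambda_i = 0$ this gives $\lambda_i = 0$ for all $i$. (These $n$ vectors are linearly dependent — they sum to $0$ — so working with the affine criterion rather than linear independence is essential here.)

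For part (b), I would first record two facts internal to a single block $\mathbb{R}^{n_l}$: the $n_l-1$ vectors $e^{n_l}_i - e^{n_l}_{i+1}$, $i\in[n_l-1]$, are linearly independent (a telescoping/triangular argument, reading off the coefficients of $e^{n_l}_1, e^{n_l}_2, \dots$ in turn) and span the zero-sum hyperplane $\{x\in\mathbb{R}^{n_l}:\sum_j x_j = 0\}$; and adjoining $e^{n_l}_1$, whose coordinates sum to $1$ and which therefore lies off that hyperplane, produces $n_l$ linearly independent vectors, i.e.\ a basis of $\mathbb{R}^{n_l}$. Now suppose scalars $\alpha^l_i$ (one per vector of $U^l$) and $\beta_i$ (one per vector of $V$) satisfy $\sum_{l,i}\alpha^l_i u^l_i + \sum_{i\in[k]}\beta_i v_i = 0$ and $\sum_{l,i}\alpha^l_i + \sum_{i}\beta_i = 0$. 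Restricting this identity to the coordinates of block $j$, every vector of $U^l$ with $l\neq j$ drops out, while $v_i$ contributes to block $j$ only for $i=j$ (a $+e^{n_j}_1$ term) and for $i=j-1$ (a $-e^{n_j}_1$ term, indices modulo $k$), so the restriction reads
\[
\sum_{i\in[n_j-1]}\alpha^j_i\,(e^{n_j}_i - e^{n_j}_{i+1}) + (\beta_j - \beta_{j-1})\,e^{n_j}_1 = 0 .
\]
By the second fact this is a dependence among a basis of $\mathbb{R}^{n_j}$, so $\alpha^j_i = 0$ for all $i$ and $\beta_j = \beta_{j-1}$. Letting $j$ range over $[k]$ annihilates every $\alpha^l_i$ and shows $\beta_1 = \cdots = \beta_k$ (the equalities close up around the cycle $k+1\to 1$); finally $\sum_i \beta_i = 0$ forces this common value to be $0$, so all coefficients vanish and the $N$ vectors are affinely independent.

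The only delicate point — and the step I would check most carefully — is the bookkeeping in the block-$j$ restriction: identifying exactly which of the $U^l$ and $V$ vectors have support in block $j$ and with which signs, and keeping the two cyclic conventions ($n+1\to1$ in (a), $k+1\to1$ in (b)) straight. Once that restriction is written correctly, the rest follows immediately from the two block-internal facts and the affine-independence criterion; I do not expect to need Lemma~\ref{lem:aff-indpt} or Lemma~\ref{lem:u+ev_ind} here, although Lemma~\ref{lem:aff-indpt} could alternatively be used to splice the block-wise independent families together with $V$.
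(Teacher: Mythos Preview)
Your proof is correct. Both approaches ultimately read off coordinates block by block, but the organization differs. The paper first argues separately that $\cup_l U^l$ is linearly independent and that $V$ is affinely independent (via part~(a)), then shows $\textrm{span}(V)\cap\textrm{span}(\cup_l U^l)=\{0\}$ by a coordinate argument, and finally invokes Lemma~\ref{lem:aff-indpt} to splice the two families together. You instead apply the affine-dependence criterion directly to the combined family and restrict to one block at a time, using the clean observation that $\{e^{n_j}_i-e^{n_j}_{i+1}:i\in[n_j-1]\}\cup\{e^{n_j}_1\}$ is a basis of $\mathbb{R}^{n_j}$; this immediately kills the $\alpha^j_i$ and forces $\beta_j=\beta_{j-1}$, after which the affine constraint finishes. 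Your route is more self-contained (no auxiliary lemma needed) and works uniformly for all $n_l\ge 2$, whereas the paper's argument is presented under the assumption $n_l\ge 4$ ``without loss of generality'' with no justification for that reduction. Your bookkeeping in the block-$j$ restriction --- that only $v_j$ and $v_{j-1}$ have support there, contributing $+e^{n_j}_1$ and $-e^{n_j}_1$ respectively --- is correct.
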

\begin{proof}
Part (a) can be easily verified. We now focus on proving Part (b).
Without loss of generality, we prove for the case that $n_l\ge4$ for all $l\in[k]$.
It is easy to verify that all vectors in $\cup^k_{l=1}U^l$ are linearly independent, and by Part (a),
the vectors in $V$ are affinely independent. We claim that 
$\textrm{span}(V)\cap \textrm{span}\{\cup^k_{l=1}U^l\}=\{0^N\}$.  
If the claim holds then by Lemma~\ref{lem:aff-indpt}, 
the $N$ vectors in $\left(\cup^k_{l=1}U^l\right)\cup W$ are affinely independent. 
To prove this claim, we label the vectors in $U^l$ as 
$u_{li}$ for $l\in[k]$ $i\in[n_l-1]$, and label the vectors in $V$
as $v_l$ for $l\in[k]$. We can verify that $\{v_l\}^{k-1}_{l=1}$
are linearly independent and $v_k=-\sum^{k-1}_{l=1}v_l$.
Suppose $\textrm{span}(V)\cap \textrm{span}\{\cup^k_{l=1}U^l\}\neq\{0^N\}$
and let $\Set*{\alpha_{li}}{l\in[k],\;i\in[n_l-1]}$ and $\Set*{\beta_l}{l\in[k-1]}$
be the coefficients such that 
\beq
w=\sum_{l\in[k]}\sum_{i\in[n_l-1]}\alpha_{li}u_{li}=\sum_{l\in[k-1]}\beta_lv_l\neq 0^N.
\eeq
Consider the $e^{n_s}_i$ entry of $w$ for $s\in[k]$ and $i\in\{2,\ldots,n_l-1\}$.
Evaluating this coefficient based on $\sum_{l\in[k-1]}\beta_lv_l$ implies that
the coefficient should be 0.
On the other side, evaluating this coefficient based on 
$\sum_{l\in[k]}\sum_{i\in[n_l-1]}\alpha_{li}u_{li}$ implies that it
should be equal to $\alpha_{s,i}-\alpha_{s,i-1}$. Therefore,
we must have $\alpha_{s,i}=\alpha_{s,i-1}$ for $s\in[k]$ and $i\in\{2,\ldots,n_l-1\}$.
For $l\in[k]$, let $a_l=\alpha_{l,i}$ for $i\in\{2,\ldots,n_l-1\}$.
By matching the coefficient corresponding to the $e^{n_1}_1$ entry of $w$,
we get $a_1=\beta_1$.  By matching the coefficient corresponding to 
the $e^{n_l}_{n_l}$ entry of $w$, we get $a_l=0$ for $l\in[k]$.
This shows that $w=0$, which leads to a contradiction.
\end{proof}

\begin{lemma}\label{lem:lin-indpt}
Let $l,m,n$ be three positive integers. 
Let $u^k\in\mb{R}^l\otimes\mb{R}^n$ and $v^k\in\mb{R}^m\otimes\mb{R}^n$ for all $k\in[n]$
be arbitrary vectors with dimension $l\times n$ and $m\times n$, respectively.
Let $u^{\prime}\in\mb{R}^{l}\otimes\mb{R}^n$, 
and $v^{\prime}\in\mb{R}^{m}\otimes\mb{R}^n$ be two arbitrary vectors.
Let $\{w^i\}^n_{i=1}$ be $n$ linearly independent vectors in $\mathbb{R}^n$ with $w^i_j\in\{0,1\}\;\;\forall i,j\in[n]$. Let 
$J_i=\{j\in[n]: w^i_j=1\}$. Consider the following sets of vectors in the 
$\mb{R}^{l\times n}\oplus\mb{R}^{m\times n}\oplus\mb{R}^n$ space:
\begin{displaymath}
\begin{aligned}
&U^k=\{[u^k,v^k,w^k]+[\epsilon e_{ij},0,0]-[\epsilon e_{i^{\prime}j},0,0]:
	\forall i,i^{\prime}\in [l],\;i\neq i^{\prime},\;\forall j\in J_k \} \\
	&\qquad\qquad \forall k\in[n], \\
&V^k=\{[u^k,v^k,w^k]+[0,\epsilon e_{ij},0]:
	\forall i\in [m],\;\forall j\in J_k \} \quad \forall k\in[n], \\
&Y=\{[u^{\prime}, v^{\prime},1^n]+[\epsilon e_{1j},0,0]-[\epsilon e_{1,j+1},0,0]:
       \forall j\in[n] \}\\ 
       &\qquad\qquad \textrm{with the convention } n+1\to 1, \\
&W=\{(u^k, v^k, w^k): \forall k\in[n]\},
\end{aligned}
\end{displaymath}
where $1^n$ is the $n$-dimensional vector with all entries being 1,
$[e_{ij},0,0]:=[e^l_i\otimes e^n_j,0^{m\times n},0^n]$ and 
$[0,e_{ij},0]:=[0^{l\times n},e^m_i\otimes e^n_j,0^n]$.
Then the set $S=\cup^n_{k=1}(U^k\cup V^k)\cup Y\cup W$ contains $(l+m+1)n$ affinely independent vectors.
\end{lemma}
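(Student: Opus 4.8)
The plan is to exhibit an explicit subfamily $T\subseteq S$ of exactly $N:=(l+m+1)n$ vectors and prove it is affinely independent; since $N=\dim\big(\mb{R}^{l\times n}\oplus\mb{R}^{m\times n}\oplus\mb{R}^n\big)$, nothing more is needed. First note that, because the $w^k$ are linearly independent $0/1$ vectors, no coordinate is $0$ in every $w^k$, so $\bigcup_{k\in[n]}J_k=[n]$; hence for each column index $j\in[n]$ we may fix some $\kappa(j)\in[n]$ with $j\in J_{\kappa(j)}$. Take $T$ to consist of the $n$ vectors of $W$ (the ``base'' points), all $n$ vectors of $Y$, and, for every $j\in[n]$, the $l-1$ vectors $[u^{\kappa(j)},v^{\kappa(j)},w^{\kappa(j)}]+[\epsilon e_{ij},0,0]-[\epsilon e_{lj},0,0]$ with $i\in[l-1]$ drawn from $U^{\kappa(j)}$ together with the $m$ vectors $[u^{\kappa(j)},v^{\kappa(j)},w^{\kappa(j)}]+[0,\epsilon e_{ij},0]$ with $i\in[m]$ drawn from $V^{\kappa(j)}$. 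This is $n+n+(l-1)n+mn=N$ vectors.

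To prove affine independence I fix the base point $p:=[u^1,v^1,w^1]\in W$ and show the $N-1$ differences $z-p$, $z\in T\setminus\{p\}$, are linearly independent, grouping them by the block carrying their $\epsilon$-part. Subtracting from each chosen $V$-vector the corresponding vector of $W$ gives the pure perturbations $[0,\epsilon e_{ij},0]$, $i\in[m]$, $j\in[n]$, which span the whole $\mb{R}^{m\times n}$-block. Subtracting from each chosen $U$-vector the corresponding vector of $W$ gives $[\epsilon e_{ij},0,0]-[\epsilon e_{lj},0,0]$, $i\in[l-1]$, $j\in[n]$, which span the zero-column-sum subspace of the $\mb{R}^{l\times n}$-block. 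Subtracting a fixed vector $q$ of $Y$ from the remaining $n-1$ gives, by Lemma~\ref{lem:S^l_W_aff_ind}(a), an $(n-1)$-dimensional family of vectors supported on row $1$ of the $\mb{R}^{l\times n}$-block with zero row-sum. Since a zero-column-sum matrix supported on a single row is $0$, Lemma~\ref{lem:aff-indpt} shows these two families are independent and together span the codimension-one subspace $\{M:\sum_{i,j}M_{ij}=0\}$ of the $u$-block; this accounts for $mn+(l-1)n+(n-1)=N-1-n$ of the required directions.

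What remains are the $n$ ``base'' differences $[u^k,v^k,w^k]-p$ for $k\ge 2$ and $q-p$, and the heart of the argument is to show they are linearly independent modulo the span constructed above. Writing $\sigma(M)$ for the sum of all entries of a matrix $M$, that span equals $\{0\}\oplus\{M:\sigma(M)=0\}\oplus(\text{the full }\mb{R}^{m\times n}\text{-block})$; quotienting by it kills the $v$-block and reduces the $u$-block to the functional $M\mapsto\sigma(M)$, so the claim reduces to linear independence in $\mb{R}\times\mb{R}^n$ of the $n$ vectors $\big(\sigma(u^k)-\sigma(u^1),\,w^k-w^1\big)$, $k\ge 2$, and $\big(\sigma(u')-\sigma(u^1),\,1^n-w^1\big)$ (here we use that the $\epsilon$-perturbation in $q$ has zero total sum). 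This holds as soon as $\{w^k-w^1\}_{k\ge2}\cup\{1^n-w^1\}$ is linearly independent in $\mb{R}^n$, that is, as soon as $1^n\notin\mathrm{aff}\{w^k:k\in[n]\}$ --- the step at which the concrete form of the $w^k$ produced by the facet-extension construction must be invoked (otherwise the extra coordinate $\sigma(u^k)$ is what breaks the tie). One last application of Lemma~\ref{lem:aff-indpt} then assembles linear independence of all $N-1$ differences, hence affine independence of $T$. I expect this final step to be the main obstacle: the matrices $u^k,v^k,u',v'$ are arbitrary, so one must ensure they introduce no spurious dependence with the rigidly structured $\epsilon$-perturbations; the three-block split, the trivial-intersection hypotheses feeding Lemma~\ref{lem:aff-indpt}, and --- should one prefer to phrase the conclusion for all sufficiently small $\epsilon>0$ rather than factor $\epsilon$ out of each perturbation column --- Lemma~\ref{lem:u+ev_ind}, are precisely what make it work.
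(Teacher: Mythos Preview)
Your construction of the subfamily $T$ and the overall strategy mirror the paper's proof closely: the paper also picks, for each $j\in[n]$, an index $\pi(j)$ with $j\in J_{\pi(j)}$ (additionally arranging $\pi$ to be a bijection, which your $\kappa$ need not be), extracts the same $(l-1)n+mn+n$ perturbed vectors together with the $n$ vectors of $W$, proves affine independence of the pure perturbations $U''\cup V''\cup Y''$ via Lemma~\ref{lem:S^l_W_aff_ind} and Lemma~\ref{lem:aff-indpt}, and then appeals to Lemma~\ref{lem:u+ev_ind}(b) in a single stroke rather than performing your explicit quotient computation.

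The final step you flag is, however, a genuine gap --- and the paper's proof shares it. You correctly reduce the residual obstruction to linear independence in $\mathbb{R}\times\mathbb{R}^n$ of $(\sigma(u^k)-\sigma(u^1),\,w^k-w^1)$ for $k\ge 2$ together with $(\sigma(u')-\sigma(u^1),\,1^n-w^1)$, and observe that $1^n\notin\mathrm{aff}\{w^k\}$ would suffice. But the hypotheses do not exclude $1^n\in\mathrm{aff}\{w^k\}$ (one of the $w^k$ may equal $1^n$), and your fallback ``the extra coordinate $\sigma(u^k)$ breaks the tie'' is unjustified because $u^k,u'$ are \emph{arbitrary}. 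Concretely: take $l=m=1$, $n=2$, $w^1=(1,1)$, $w^2=(1,0)$, and set every $u^k,v^k,u',v'$ to zero. Then every vector of $S$ lies in the codimension-two affine flat $\{z\in\mathbb{R}^6: z_1+z_2=0,\ z_5=1\}$, so $S$ contains at most five affinely independent vectors rather than the required $(l+m+1)n=6$. Thus the lemma is false as stated; the paper's invocation of Lemma~\ref{lem:u+ev_ind}(b) silently assumes the trivial-intersection hypothesis $\mathrm{span}\{\text{perturbations}\}\cap\mathrm{span}\{\text{base points}\}=\{0\}$, which fails for precisely the same reason.
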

\begin{proof}
Since $\{w^i\}^n_{i=1}$ are $n$ linearly independent binary vectors in $\mathbb{R}^n$,
for each $j\in[n]$, there exists an $i\in[n]$ (depending on $j$) such that $j\in J_i$.
Therefore, there exists a mapping $\pi:[n]\to[n]$ such that $\pi$ is surjective and $j\in J_{\pi(j)}$.
We define the following sets of vectors:
\begin{displaymath}
\begin{aligned}
&U^{\prime j}=\{[u^{\pi(j)},v^{\pi(j)},w^{\pi(j)}]+[\epsilon e_{ij},0,0]-[\epsilon e_{i+1,j},0,0]:
	\forall i\in [l-1]\} &\quad \forall j\in[n] \\
&V^{\prime j}=\{[u^{\pi(j)},v^{\pi(j)},w^{\pi(j)}]+[0,\epsilon e_{ij},0]:
	\forall i\in [m] \} &\quad \forall j\in[n]
\end{aligned}
\end{displaymath}
Clearly, the sets $\{U^{\prime j}:\forall j\in[n]\}$ are disjoint and $\cup_{j\in[n]}U^{\prime j}\subseteq \cup_{k\in[n]}U^k$.  
Similarly, the sets $\{V^{\prime j}:\forall j\in[n]\}$ are disjoint and $\cup_{j\in[n]}V^{\prime j}\subseteq \cup_{k\in[n]}V^k$.
We claim that all vectors in the following sets are affinely independent:
\begin{displaymath}
\begin{aligned}
&U^{\prime\prime}=\{[e_{ij},0,0]-[e_{i+1,j},0,0]: \forall i\in [l-1],\;\forall j\in[n]\},  \\
&V^{\prime\prime}=\{[0,e_{ij},0]: \forall i\in [m],\forall j\in[n] \}, \\
&Y^{\prime\prime}=\{[e_{1j},0,0]-[e_{1,j+1},0,0]:\forall j\in[n] \}.
\end{aligned}
\end{displaymath}
The claim can be proved using Lemma~\ref{lem:S^l_W_aff_ind}(b) and Lemma~\ref{lem:aff-indpt} with following argument.
By Lemma~\ref{lem:S^l_W_aff_ind}(b), all vectors in $U^{\prime\prime}\cup Y^{\prime\prime}$ are affinely independent.
One can easily verify that all vectors in $V^{\prime\prime}$ are independent and $\textrm{span}\{U^{\prime\prime}\cup Y^{\prime\prime}\}
\cap\textrm{span}\{V^{\prime\prime}\}=\{[0,0,0]\}$, and hence by Lemma~\ref{lem:aff-indpt} all vectors in 
$U^{\prime\prime}\cup V^{\prime\prime}\cup Y^{\prime\prime}$ are affinely independent. Then by Lemma~\ref{lem:u+ev_ind}(b),
the following $(l+m+1)n$ vectors are affinely independent:
\begin{displaymath}
\begin{aligned}
&U^*=\{[u^{\pi(j)},v^{\pi(j)},w^{\pi(j)}]+[\epsilon e_{ij},0,0]-[\epsilon e_{i+1,j},0,0]:\forall i\in [l-1],\;\forall j\in[n]\},  \\
&V^*=\{[u^{\pi(j)},v^{\pi(j)},w^{\pi(j)}]+[0,\epsilon e_{ij},0]: \forall i\in [m],\;\forall j\in[n] \}, \\
&Y^*=\{[u^{\prime},v^{\prime},1^n]+[\epsilon e_{1j},0,0]-[\epsilon e_{1,j+1},0,0]:\forall j\in[n] \},\\
&W=\{[u^k,v^k,w^k]:\forall k\in[n]\}.
\end{aligned}
\end{displaymath} 
Since we have $U^*\cup V^*\cup Y^*\cup W\subseteq S$, 
it implies that $S$ should contain $(l+m+1)n$ affinely independent vectors.
\end{proof}

\subsection{Valid and facet-defining inequalities}
Our goal is to construct valid and facet-defining inequalities for the convex hull $\CIJ$  
induced by the subsets $I, J$ of indices for the customer sites
and candidate locations, and then show that some of them are also valid and facet-defining 
inequalities for $\conv(\chi^L)$ and $\conv(\chi)$: 
\beq\label{def:ZIJ}
\CIJ=\Set*{[x,y,q]\in\conv(\chi^L)}{y_j=0\;\forall j\in[n]\setminus J,\;x_{ij}=0,\;q_{ij}=0\;\forall(i,j)\notin I\times J}.
\eeq
The conversion from the facets of $\CIJ$ to the facets of $\conv(\chi^L)$ is depicted by the following theorem:
\begin{theorem}[Facet extension]\label{thm:facet-extension}
Consider the polytope $\cchiL$ defined in \eqref{eqn:chi_L} induced by
the index subsets $I\subseteq[m]$ and $J\subseteq[n]$. 
Suppose $\cchiL$ is in full dimension, i.e., $\dim\cchiL=(2m+1)n$,
and $d_{ij}>0$ for $i\in[m],\; j\in[n]$. Suppose an inequality 
\beq\label{eqn:gen-facet}
\sum_{j\in J}\alpha_jy_j+\sum_{i\in I}\sum_{j\in J}(x_{ij}+\beta_{ij}q_{ij})\le \gamma
\eeq 
with coefficients $\alpha_j$, $\beta_{ij}$ and $\gamma$ define a facet of $\CIJ$ and it is
valid for $\cchiL$. If there exists a set $V$ of $(2|I|+1)|J|$ affinely independent points in 
$\CIJ$ that are on the plane defined by this inequality and $V$ satisfies the following conditions:
\begin{itemize}
	\item[\emph{(1)}] For any point $[x,y,q]\in V$, it holds that $y_j\in\{0,1\}\;\forall j\in J$; 
	\item[\emph{(2)}] $\forall j\in J$ there exists an point $[x^{(j)},y^{(j)},q^{(j)}]\in V$ such that
				  $y^{(j)}_j=1$ and $\sum_{i\in I}x^{(j)}_{ij}<c_j$;
	\item[\emph{(3)}] For each $i\in I$, there exists a point 
				 $[\tilde{x}^{(i)},\tilde{y}^{(i)},\tilde{q}^{(i)}]\in V$ such that \newline
				 $\sum_{j\in J}q^{(i)}_{ij}<1$.
\end{itemize}
Then the inequality \eqref{eqn:gen-facet} defines a facet of $\cchiL$.
\end{theorem}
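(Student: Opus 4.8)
The plan is to exhibit $(2m+1)n$ affinely independent points of $\cchiL$ lying on the hyperplane $H=\{[x,y,q]:\ \sum_{j\in J}\alpha_jy_j+\sum_{i\in I}\sum_{j\in J}(x_{ij}+\beta_{ij}q_{ij})=\gamma\}$; since \eqref{eqn:gen-facet} is valid for $\cchiL$ and $\dim\cchiL=(2m+1)n$, that suffices. The guiding observation is that $H$ does not constrain the coordinates $y_j$ for $j\in[n]\setminus J$, nor $x_{ij}$ and $q_{ij}$ for $(i,j)\notin I\times J$; I call these the \emph{free} coordinates. There are exactly $(2m+1)n-(2|I|+1)|J|$ of them, and by the defining restrictions of $\CIJ$ in \eqref{def:ZIJ} every point of $V$ lies in the coordinate subspace $L_0$ on which all free coordinates vanish. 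I would also record two facts: for an integral $\hat y\in\{0,1\}^n$ one has $\cchiL\cap\{y=\hat y\}=\chi^L(\hat y)$ (a convex combination of $0/1$ vectors is again $0/1$ only if all of them coincide), so, since each point of $V$ is integral in $y$ by condition (1) together with the definition of $\CIJ$, feasibility of any point obtained by turning some $y_j$ from $0$ to $1$ and then perturbing $x$ and $q$ can be verified by inspecting the linear constraints of a single $\chi^L(\hat y)$; and full-dimensionality of $\cchiL$ forces $c_j>0$ for every $j$ (otherwise $x_{ij}=0$ for all $i$ is implied), which I use when opening an idle facility.

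First I would activate each free coordinate exactly once, building one new point of $\cchiL\cap H$ that differs from a point of $V$ only in free coordinates (hence is automatically on $H$). Fix any $\bar p\in V$, and let $p^{(j)}$ and $\tilde p^{(i)}$ denote the points furnished by conditions (2) and (3). For the free coordinate $y_j$ ($j\notin J$): take $\bar p$ and set $y_j=1$ --- opening an empty facility preserves feasibility. For the free coordinate $q_{ij}$ ($(i,j)\notin I\times J$): if $i\notin I$ the $i$-th row of every point of $\CIJ$ is identically zero, so from $p^{(j)}$ (if $j\in J$) or from $\bar p$ with $y_j$ turned on (if $j\notin J$) I raise $q_{ij}$ by a small $\epsilon>0$; if $i\in I$ (so necessarily $j\notin J$) I start from $\tilde p^{(i)}$, turn on $y_j$, and raise $q_{ij}$ by a small $\epsilon$, where the strict inequality $\sum_{j'\in J}\tilde{q}^{(i)}_{ij'}<1$ in condition (3) gives the room. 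For the free coordinate $x_{ij}$ ($(i,j)\notin I\times J$): I raise $x_{ij}$ by $\epsilon$ and simultaneously $q_{ij}$ by $\epsilon/d_{ij}$ (legitimate since $d_{ij}>0$) to create slack in the $i$-th demand-attraction constraint; when $j\in J$ I start from $p^{(j)}$, whose capacity slack $\sum_{i\in I}x^{(j)}_{ij}<c_j$ from condition (2) absorbs the added flow, and when $j\notin J$ I start from $\tilde p^{(i)}$ --- or from $\bar p$ if $i\notin I$ --- with $y_j$ turned on, where $c_j>0$ absorbs the flow. In each case the verification against the capacity, demand-attraction, $\sum_jq_{ij}\le1$ and $q_{ij}\le y_j$ constraints is routine, provided the finitely many $\epsilon$'s are all chosen small enough.

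To conclude I would show that $V$ together with these new points is affinely independent, i.e. that the $(2|I|+1)|J|-1$ vectors $p-\bar p$ ($p\in V\setminus\{\bar p\}$) together with the new-point perturbation vectors are linearly independent. The former lie in $L_0$, hence project to $0$ on the span of the free coordinates. Ordering the free coordinates as the $y$-block, then the $q$-block, then the $x$-block, the perturbation vector activating a free coordinate has that coordinate as its unique entry in the last of these three blocks it touches, with every other nonzero free entry lying in a strictly earlier block; consequently the matrix whose rows are the free parts of the perturbation vectors is block lower-triangular, with diagonal blocks the identity (for the $y$-part) and diagonal matrices carrying the positive $\epsilon$'s (for the $q$- and $x$-parts), hence invertible. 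So the perturbation vectors have linearly independent projections onto the free coordinates, and combining this with the affine independence of $V$ inside $L_0$ gives the full rank. This is exactly the bookkeeping packaged by Lemma~\ref{lem:aff-indpt} (an affinely independent family and a linearly independent family whose spans meet only in $0$ are jointly affinely independent), with Lemma~\ref{lem:u+ev_ind} absorbing the ``for all sufficiently small $\epsilon$'' quantifier and Lemmas~\ref{lem:n+1->n}, \ref{lem:S^l_W_aff_ind} and \ref{lem:lin-indpt} assembling the triangular pattern. Thus $\cchiL$ has $(2m+1)n$ affinely independent points on $H$, and \eqref{eqn:gen-facet} defines a facet.

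I expect the middle step to be the real work: one must check that every free coordinate can be switched on starting from a point of $V$ without leaving $\cchiL$, and this is precisely the purpose of conditions (2) and (3) --- condition (2) supplies, for each column $j\in J$, an already-open facility with strict capacity slack, so extra $x$- and $q$-mass can be pushed into column $j$; condition (3) supplies, for each row $i\in I$, strict slack in $\sum_jq_{ij}\le1$, so extra $q$-mass --- and hence, via $d_{ij}>0$, extra $x$-mass --- can be added in row $i$. The remaining free coordinates, namely those with $j\notin J$ or $i\notin I$, are handled trivially since idle facilities can be opened freely and idle rows of $\CIJ$ are zero. The only other point requiring care is that the various perturbation parameters be small simultaneously, which is harmless.
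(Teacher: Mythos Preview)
Your approach is correct and is essentially the same as the paper's: activate each of the $(2m+1)n-(2|I|+1)|J|$ ``free'' coordinates by perturbing a suitable point of $V$, verify feasibility using conditions~(2) and~(3), and deduce affine independence from the fact that $V\subset L_0$ while the perturbation vectors have linearly independent projections onto the free coordinates (Lemma~\ref{lem:aff-indpt}). The paper organizes the new points into three batches $V^1,V^2,V^3$ indexed by whether $i\in I$ or $j\in J$, and takes slightly different perturbation sizes (often setting $q_{ij}$ all the way to $1$ for rows $i\notin I$ rather than to a small $\epsilon$), but the construction and the independence argument are the same; your block--lower--triangular description of the free-coordinate matrix is in fact more explicit than the paper's one-line appeal to ``looking at the perturbation terms.'' One cosmetic remark: your closing sentence overstates the role of Lemmas~\ref{lem:u+ev_ind}--\ref{lem:lin-indpt}; your self-contained triangular argument already does all the work, and only Lemma~\ref{lem:aff-indpt} is genuinely invoked.
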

\begin{proof}
We first show that $\cchiL$ is of full dimension. 
Since $\cchiL$ is defined by $(2m+1)n$ variables, 
we will prove $\dim\cchiL=(2m+1)n$ by constructing $(2m+1)n+1$ 
affinely independent points in $\cchiL$ as follows:
\begin{displaymath}
\ba
&[0,0,0], \quad [0,e_j,0] \;\;\forall j\in\F,\quad [0,e_j,e_{ij}] \;\; \forall i\in\cS\;\forall j\in\F, \\
&[\epsilon e_{ij},e_j,e_{ij}] \;\; \forall i\in\cS\;\forall j\in\F,  
\ea
\end{displaymath}
where $\epsilon$ is a sufficient small positive constant. 
In particular, $\epsilon<\min\{d_{ij},c_j\}$.
It is easy to verify the above $(2m+1)n+1$ points are affinely 
independent points in $\cchiL$, and hence $\dim\cchiL=(2m+1)n$.
Using a similar argument, we can show that $\dim\CIJ=(2|I|+1)|J|$. 
We now focus on proving the theorem.

 We refer the inequality $\sum_{j\in J}\alpha_jy_j+\sum_{i\in I}\sum_{j\in J}(x_{ij}+\beta_{ij}q_{ij})\le \gamma$
as $ineq$ in the proof. Since $ineq$ defines a facet of $\CIJ$, 
there exist $(2|I|+1)|J|$ affinely independent points in $\CIJ$ 
that are on the plane defined by $ineq$.
We denote this set of the $(2|I|+1)|J|$ affinely independent points as $V$. 
We will construct additional $(2m+1)n-(2|I|+1)|J|$ linearly 
independent points of $\cchiL$ that are on the plane
defined by $ineq$. Let $[x,y,q]$ be an arbitrary point in $V$, 
we first construct the following set of points:
\begin{displaymath}
\begin{aligned}
&V^1= \Set*{[x,y,q]+[0,e_j,0]}{ \forall j\in[n]\setminus J }  \\
&\qquad \cup \Set*{[x,y,q]+[0,e_j,e_{ij}]}{\forall i\in[m]\setminus I,\;\forall j\in[n]\setminus J} \\
&\qquad \cup \Set*{[x,y,q]+[\epsilon e_{ij},e_j,e_{ij}]}{\forall i\in[m]\setminus I, \;\forall j\in[n]\setminus J}, 
\end{aligned} 
\end{displaymath}
with $\epsilon>0$ sufficient small. 
We can verify that all points in $V^1$ are linearly independent points of $\cchiL$ 
that are on the plane defined by $ineq$, and $|V^1|=(2m-2|I|+1)(n-|J|)$. 
Let $[x^{(j)},y^{(j)},q^{(j)}]\in V$ for every $j\in J$ be the vector satisfying the condition (2).
We construct the following set of points: 
\begin{displaymath}
\begin{aligned}
V^2=&\Set*{[x^{(j)},y^{(j)},q^{(j)}]+[0,0,e_{ij}]}{\forall i\in [m]\setminus I,\;\forall j\in J  }\\
&\cup\Set*{[x^{(j)},y^{(j)},q^{(j)}]
 +[\epsilon e_{ij},0,e_{ij}]}{\forall i\in [m]\setminus I,\;\forall j\in J },
\end{aligned}
\end{displaymath}
with sufficient small $\epsilon>0$. 
One can verify that all points in $V^2$ are linearly independent points of $\CIJ$ 
that are on the plane defined by $ineq$, and $V^2$ contains $2(m-|I|)|J|$ linearly independent points.
Finally, we construct the following set of points:
\begin{displaymath}
\begin{aligned}
V^3=&\Set*{[\tilde{x}^{(i)},\tilde{y}^{(i)},\tilde{q}^{(i)}]+[0,e_j,\epsilon e_{ij}]}{\forall i\in I,\; \forall j\in[n]\setminus J} \\
&\cup\Set*{[\tilde{x}^{(i)},\tilde{y}^{(i)},\tilde{q}^{(i)}]+[\epsilon^{\prime}e_{ij},e_j,\epsilon e_{ij}]}{\forall i\in I,\;\forall j\in[n]\setminus J},
\end{aligned}
\end{displaymath}
where $[\tilde{x}^{(i)},\tilde{y}^{(i)},\tilde{q}^{(i)}]$ is the point satisfying the condition (3)
for each $i\in I$, and $\epsilon, \epsilon^{\prime}>0$ are sufficient small numbers. 
Again, it is straightforward to verify that all points in $V^3$ are linearly independent points of $\cchiL$
that are on the plane defined by $ineq$, and $|V^3|=2|I|(n-|J|)$. 
By looking at the perturbation terms involved in $V^1$, $V^2$ and $V^3$
we see that the vectors in $V^1$, $V^2$ and $V^3$ are linearly independent.
Furthermore, since $\textrm{span}\{V^1,V^2,V^3\}\cap\textrm{span}(V)=\{[0,0,0]\}$,
then by Lemma~\ref{lem:aff-indpt}, all vectors in $V\cup V^1\cup V^2\cup V^3$ are affinely independent.
The number of points we have constructed is
\begin{displaymath}
\begin{aligned}
|V|+|V^1|+|V^2|+|V^3|&=(2|I|+1)|J|+(2m-2|I|+1)(n-|J|) \\
&\quad+2(m-|I|)|J|+2|I|(n-|J|) \\
&=2mn+n,
\end{aligned}
\end{displaymath}
which shows that the $ineq$ defines a facet of $\cchiL$.
\end{proof}

\begin{theorem}\label{thm:valid-ineq-Z}
Suppose $d_{ij}>0$ for all $i\in[m],\in[n]$. The following hold:
\newline
\emph{(a)} For any subsets of indices $I\subseteq[m]$, 
$J\subset[n]$ and index $j^{\prime}\in[n]\setminus J$  
the following inequality is valid for $\cchiL$
\begin{equation}\label{eqn:Z-valid-ineq-1}
\sum_{i\in I}\sum_{j\in J\cup\{j^{\prime}\}}x_{ij} -\Big(\sum_{i\in I}\underset{j\in[n]}{\emph{max}}\;d_{ij} -\sum_{j\in J} c_j\Big)y_{j^{\prime}} 
\le \sum_{j\in J}c_j.
\end{equation}
\newline
\emph{(b)} Let $I\subseteq[m]$ and $J\subset[n]$ be two subset of indices, 
if $\sum_{j\in J}c_j\le \sum_{i\in I}\emph{max}_{j\in J}d_{ij}$, 
then the following inequality is valid for $\cchiL$
\begin{equation}\label{eqn:Z-valid-ineq-2}
\sum_{i\in I}\sum_{j\in[n]}x_{ij}
-\sum_{j^{\prime}\in[n]\setminus J}\Big(\sum_{i\in I}\underset{j\in J\cup\{j^{\prime}\}}{\emph{max}}\;d_{ij} -\sum_{j\in J} c_j\Big)y_{j^{\prime}} 
\le \sum_{j\in J}c_j.
\end{equation}
\end{theorem}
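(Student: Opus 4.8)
The plan is to verify both inequalities directly on the vertices of $\conv(\chi^L)$, which by Proposition~\ref{prop:conv(chi)-polytope} and the decomposition $\conv(\chi^L)=\conv(\cup_{\hat y}\chi^L(\hat y))$ reduces to checking each inequality at an arbitrary point $[x,y,q]\in\chi^L(\hat y)$ for a fixed binary $\hat y$. (A linear inequality valid on every $\chi^L(\hat y)$ is valid on the convex hull of their union.) Throughout I would use only the two structural facts: $\sum_{i\in I}\sum_{j\in S}x_{ij}\le\sum_{j\in S}c_j\hat y_j$ for any $S\subseteq[n]$ (summing the capacity constraints), and $\sum_{j\in[n]}x_{ij}\le\sum_{j\in[n]}d_{ij}q_{ij}\le\max_{j:\,\hat y_j=1}d_{ij}\le\max_{j\in[n]}d_{ij}$, where the middle step uses $\sum_j q_{ij}\le1$, $q_{ij}\le\hat y_j$, and the same averaging argument already spelled out in the proof of Proposition~\ref{prop:chi_chiL}.

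For part~(a), I would split on the value of $\hat y_{j'}\in\{0,1\}$. If $\hat y_{j'}=0$, then $x_{ij'}=0$ for all $i$ (since $\sum_i x_{ij'}\le c_{j'}\hat y_{j'}=0$ and $x\ge0$), so the left-hand side is $\sum_{i\in I}\sum_{j\in J}x_{ij}\le\sum_{j\in J}c_j\hat y_j\le\sum_{j\in J}c_j$, as required. If $\hat y_{j'}=1$, the inequality to check becomes $\sum_{i\in I}\sum_{j\in J\cup\{j'\}}x_{ij}\le\sum_{j\in J}c_j+\big(\sum_{i\in I}\max_{j\in[n]}d_{ij}-\sum_{j\in J}c_j\big)=\sum_{i\in I}\max_{j\in[n]}d_{ij}$, which is exactly the aggregated demand bound $\sum_{i\in I}\sum_{j\in[n]}x_{ij}\le\sum_{i\in I}\max_{j\in[n]}d_{ij}$ restricted to $j\in J\cup\{j'\}$. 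So part~(a) needs no hypothesis on the data beyond $d_{ij}>0$; the coefficient $-(\sum_i\max_j d_{ij}-\sum_J c_j)$ is precisely the gap chosen so that the $\hat y_{j'}=1$ case is tight against the demand bound and the $\hat y_{j'}=0$ case is tight against the capacity bound.

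For part~(b), under the hypothesis $\sum_{j\in J}c_j\le\sum_{i\in I}\max_{j\in J}d_{ij}$, I would again fix $\hat y$ and let $K=\{j'\in[n]\setminus J:\hat y_{j'}=1\}$. If $K=\emptyset$, all $x_{ij'}$ with $j'\notin J$ vanish, so the left side is $\sum_{i\in I}\sum_{j\in J}x_{ij}\le\sum_{j\in J}c_j\hat y_j\le\sum_{j\in J}c_j$ and we are done. If $K\ne\emptyset$, I would drop all but one term of the sum over $j'\in[n]\setminus J$ — keeping a single index $j^*\in K$ — which only decreases the left side because each retained coefficient $\big(\sum_{i\in I}\max_{j\in J\cup\{j'\}}d_{ij}-\sum_{j\in J}c_j\big)y_{j'}$ is nonnegative (this is where the hypothesis $\sum_J c_j\le\sum_{i\in I}\max_{j\in J}d_{ij}\le\sum_{i\in I}\max_{j\in J\cup\{j'\}}d_{ij}$ enters). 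That reduces the claim to $\sum_{i\in I}\sum_{j\in[n]}x_{ij}\le\sum_{i\in I}\max_{j\in J\cup\{j^*\}}d_{ij}$, and since $x_{ij}=0$ whenever $\hat y_j=0$, the left side equals $\sum_{i\in I}\sum_{j:\hat y_j=1}x_{ij}\le\sum_{i\in I}\max_{j:\hat y_j=1}d_{ij}$; it then remains to observe that $\max_{j:\hat y_j=1}d_{ij}\le\max_{j\in J\cup\{j^*\}}d_{ij}$ is \emph{not} automatic, so I would instead bound the left side by the \emph{capacity} side on $J$ plus the demand side on $\{j^*\}$: $\sum_{i\in I}\sum_{j\in J}x_{ij}+\sum_{i\in I}\sum_{j\notin J}x_{ij}\le\sum_{j\in J}c_j+\sum_{i\in I}d_{i j^*}q_{ij^*}$ is too lossy, so the cleaner route is to note $\sum_{i\in I}\sum_{j\in[n]}x_{ij}\le\sum_{i\in I}\max_{j:\hat y_j=1}d_{ij}$ and argue that the right side $\sum_{j\in J}c_j+\sum_{j'\in K}(\sum_{i\in I}\max_{j\in J\cup\{j'\}}d_{ij}-\sum_J c_j)$ dominates it by choosing, for each $i$, an index $j'_i\in K$ attaining $\max_{j:\hat y_j=1}d_{ij}$ if that maximizer lies outside $J$ (so $\max_{j\in J\cup\{j'_i\}}d_{ij}=\max_{j:\hat y_j=1}d_{ij}$) and using the $J$-bound otherwise. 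The main obstacle, and the step I expect to require the most care, is precisely this last bookkeeping in part~(b): matching the single aggregated upper bound on $\sum_{i\in I}\sum_j x_{ij}$ against a \emph{sum} of per-$j'$ coefficients without double-counting the $\sum_J c_j$ term — one must argue that either a single capacity bound on $J$ suffices (when $K=\emptyset$) or that retaining exactly one well-chosen index $j^*\in K$ and discarding the rest already gives a valid bound, and verify the discarded coefficients are genuinely nonnegative under the stated hypothesis.
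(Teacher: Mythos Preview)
Part~(a) is correct and essentially identical to the paper's proof: split on $y_{j'}\in\{0,1\}$, use the aggregated capacity bound on $J$ when $y_{j'}=0$ and the aggregated demand bound $\sum_{j\in[n]}x_{ij}\le\max_{j\in[n]}d_{ij}$ when $y_{j'}=1$.

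Part~(b) has a genuine gap. You correctly observe that keeping a single global index $j^*\in K$ and discarding the rest fails, because $\max_{j:\hat y_j=1}d_{ij}$ need not be bounded by $\max_{j\in J\cup\{j^*\}}d_{ij}$ when the maximiser for $i$ is some other $j''\in K$. You then propose a per-$i$ choice $j'_i$, but your concluding sentence seems to revert to the already-rejected single-$j^*$ strategy (``retaining exactly one well-chosen index $j^*\in K$ and discarding the rest already gives a valid bound''). The per-$i$ idea is the right one, yet it is not complete as written: if different $i$ require different $j'_i$, you cannot simply discard the remaining terms of $\sum_{j'\in K}(\cdots)$, since each such term carries a full sum over $I$ and the subtracted $\sum_{j\in J}c_j$ appears $|K|$ times on the right.

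The missing ingredient, and the place where the hypothesis is used a second time, is this. Write $C=\sum_{j\in J}c_j$, $k=|K|$, and $D_i^{S}=\max_{j\in S}d_{ij}$. After moving the $y_{j'}$-sum to the right, the target becomes
\[
\sum_{i\in I}\sum_{j\in[n]}x_{ij}\;\le\;(1-k)\,C\;+\;\sum_{j'\in K}\sum_{i\in I}D_i^{J\cup\{j'\}}.
\]
Since $1-k\le0$ and $C\le\sum_{i\in I}D_i^{J}$ by hypothesis, one may replace $(1-k)C$ by $(1-k)\sum_{i\in I}D_i^{J}$, reducing the claim to the per-$i$ inequality
\[
D_i^{J\cup K}\;\le\;(1-k)\,D_i^{J}\;+\;\sum_{j'\in K}D_i^{J\cup\{j'\}},
\]
which is immediate: every summand on the right is at least $D_i^{J}$, and the summand with $j'$ equal to a maximiser of $d_{ij}$ over $J\cup K$ contributes $D_i^{J\cup K}$. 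This is exactly the bookkeeping the paper performs via the partition $\{I_{j^*}\}_{j^*\in J\cup J'}$ and the estimates \eqref{eqn:T1}--\eqref{eqn:R2-final}; your proposal points in the right direction but stops short of this step.
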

\begin{proof}
Part (a): It suffices to show the validness for the two cases: $y_{j^{\prime}}=0$ and $y_{j^{\prime}}=1$.
If $y_{j^{\prime}}=0$, then we have
\begin{equation}\label{eqn:Z-valid-proof1}
\begin{aligned}
&\sum_{i\in I}\sum_{j\in J\cup\{j^{\prime}\}}x_{ij} -\Big(\sum_{i\in I}\underset{j\in[n]}{\textrm{max}}\;d_{ij} -\sum_{j\in J} c_j\Big)y_{j^{\prime}} \\
&=\sum_{i\in I}\sum_{j\in J}x_{ij}  
\le \sum_{j\in J}\sum_{i\in [m]}x_{ij} \le \sum_{j\in J}c_j, 
\end{aligned}
\end{equation}
where we use $y_{j^{\prime}}=0$, $y_j\le 1$ and the constraint 
$\sum_{i\in[m]}x_{ij}\le c_jy_j$ to get the above inequalities.
For the case of $y_{j^{\prime}}=1$, we have 
\begin{equation}\label{eqn:Z-valid-proof2}
\begin{aligned}
&\sum_{i\in I}\sum_{j\in J\cup\{j^{\prime}\}}x_{ij} -\Big(\sum_{i\in I}\underset{j\in[n]}{\textrm{max}}\;d_{ij} -\sum_{j\in J} c_j\Big)y_{j^{\prime}} \\
&=\sum_{i\in I}\sum_{j\in J\cup\{j^{\prime}\}}x_{ij} -\sum_{i\in I}\underset{j\in[n]}{\textrm{max}}\;d_{ij} +\sum_{j\in J} c_j \\
&\le \sum_{i\in I}\sum_{j\in [n]}x_{ij}-\sum_{i\in I}\underset{j\in [n]}{\textrm{max}}\;d_{ij} +\sum_{j\in J} c_j \\
&\le \sum_{i\in I}\sum_{j\in [n]}d_{ij}q_{ij} -\sum_{i\in I}\underset{j\in [n]}{\textrm{max}}\;d_{ij} +\sum_{j\in J} c_j \le \sum_{j\in J} c_j,
\end{aligned}
\end{equation}
where we use $y_{j^{\prime}}=1$, and the constraint 
$\sum_{j\in[n]}x_{ij}\le\sum_{j\in[n]}d_{ij}q_{ij}$ in
\eqref{eqn:Z-valid-proof2}. 

Part (b): For a feasible point $[x,y,q]$, let $J^{\prime}=\Set*{j\in[n]\setminus J}{y_j=1}$.
For any $i\in I$, let $\pi(i)=\textrm{argmax}_{j\in J\cup J^\prime}d_{ij}$ where ties are broken arbitrarily. 
Let $I_{j}=\Set*{i\in I}{\pi(i)=j}$ for every $j\in J\cup J^\prime$, and by definition
the subsets $\{I_j\}_{j\in J\cup J^\prime}$ form a partition of $I$. 
If $|J^{\prime}|\le 1$, then \eqref{eqn:Z-valid-ineq-2} reduces to \eqref{eqn:Z-valid-ineq-1}.
Therefore without loss of generality, we assume $|J^{\prime}|\ge 2$. 
The expression on the left can be relaxed as
\begin{equation}\label{eqn:valid-ineq-LHS}
\begin{aligned}
&\sum_{i\in I}\sum_{j\in[n]}x_{ij}
-\sum_{j^{\prime}\in[n]\setminus J}\Big(\sum_{i\in I}\underset{j\in J\cup\{j^{\prime}\}}{\textrm{max}}\;d_{ij} -\sum_{j\in J} c_j\Big)
y_{j^{\prime}} \\
&\le\sum_{i\in I}\sum_{J\cup J^\prime}d_{ij}q_{ij}-\sum_{j^{\prime}\in J^{\prime}}\Big(\sum_{i\in I}\underset{j\in J\cup\{j^{\prime}\}}{\textrm{max}}\;d_{ij} -\sum_{j\in J} c_j\Big).
\end{aligned}
\end{equation}
Notice that we have the following inequalities hold
\begin{equation}\label{eqn:valid-ineq-R1}
\begin{aligned}
&\sum_{i\in I}\sum_{j\in J\cup J^\prime}d_{ij}q_{ij} \le \sum_{i\in I}d_{i\pi(i)}
=\sum_{j\in J\cup J^\prime} \sum_{i\in I_j} d_{ij} \\
&\le \sum_{j^*\in J}\sum_{i\in I_{j^*}}\underset{j\in J}{\textrm{max}}\; d_{ij}
  + \sum_{j^*\in J^{\prime}}\sum_{i\in I_{j^*}}  d_{ij^*}. 
\end{aligned}
\end{equation}
Furthermore, the second term in \eqref{eqn:Z-valid-ineq-2} is evaluated as 
\begin{equation}\label{eqn:second-term-in-ineq}
\begin{aligned}
&\sum_{j^{\prime}\in[n]\setminus J}\Big(\sum_{i\in I}\underset{j\in J\cup\{j^{\prime}\}}{\textrm{max}}\;d_{ij} -\sum_{j\in J} c_j\Big)y_{j^{\prime}} =\sum_{j^{\prime}\in J^{\prime}}\Big(\sum_{i\in I}\underset{j\in J\cup\{j^{\prime}\}}{\textrm{max}}\; d_{ij} -\sum_{j\in J} c_j\Big) \\
&=\sum_{j^{\prime}\in J^{\prime}}\sum_{j^*\in J\cup J^\prime}\sum_{i\in I_{j^*}} \underset{j\in J\cup\{j^{\prime}\}}{\textrm{max}}\; d_{ij}
- |J^{\prime}|\sum_{j\in J}c_j. 
\end{aligned}
\end{equation}
The first term in \eqref{eqn:second-term-in-ineq} can be decomposed as
\begin{equation}\label{eqn:valid-ineq-R2}
\begin{aligned}
&\sum_{j^{\prime}\in J^{\prime}}\sum_{j^*\in J\cup J^\prime}\sum_{i\in I_{j^*}} \underset{j\in J\cup\{j^{\prime}\}}{\textrm{max}}\; d_{ij}\\
&=\sum_{j^{\prime}\in J^{\prime}}\sum_{j^*\in J}\sum_{i\in I_{j^*}}\underset{j\in J\cup\{j^{\prime}\}}{\textrm{max}}\; d_{ij}
 + \sum_{j^{\prime}\in J^{\prime}}\sum_{j^*\in J^{\prime}}\sum_{i\in I_{j^*}} \underset{j\in J\cup\{j^{\prime}\}}{\textrm{max}}\; d_{ij}
  \\
 &:=T_1+T_2,
\end{aligned}
\end{equation}
where $T_1$ and $T_2$ represent the first and second term 
on the right side of \eqref{eqn:valid-ineq-R1}, respectively.
The term $T_1$ is evaluated as
\beq\label{eqn:T1}
\ba
&T_1=\sum_{j^{\prime}\in J^{\prime}}\sum_{j^*\in J}\sum_{i\in I_{j^*}}\underset{j\in J\cup\{j^{\prime}\}}{\textrm{max}}\; d_{ij}&=\sum_{j^{\prime}\in J^{\prime}}\sum_{j^*\in J}\sum_{i\in I_{j^*}}\underset{j\in J}{\textrm{max}}\; d_{ij} \\
&=|J^{\prime}|\sum_{j^*\in J}\sum_{i\in I_{j^*}}\underset{j\in J}{\textrm{max}}\; d_{ij},
\ea
\eeq
where we use the fact that for any $j^*\in J$, $i\in I_{j^*}$ and $j^{\prime}\in[n]\setminus J$, 
we have $d_{ij^*}=\max_{j\in J\cup\{j^{\prime}\}}d_{ij}=\max_{j\in J}d_{ij}$.
The term $T_2$ can be lower bounded as:
\begin{equation}\label{eqn:T2}
\begin{aligned}
 &T_2=\sum_{j^*\in J^{\prime}}\sum_{i\in I_{j^*}}\sum_{j^{\prime}\in J^{\prime}} \underset{j\in J\cup\{j^{\prime}\}}{\textrm{max}}\; d_{ij} \\
 &=\sum_{j^*\in J^{\prime}}\sum_{i\in I_{j^*}} \max_{j\in J\cup\{j^*\}} d_{ij} 
 + \sum_{j^*\in J^{\prime}}\sum_{i\in I_{j^*}} \sum_{j^{\prime}\in J^{\prime}\setminus\{j^*\}}\underset{j\in J\cup\{j^{\prime}\}}{\textrm{max}}\; d_{ij} \\
 &=\sum_{j^*\in J^{\prime}}\sum_{i\in I_{j^*}} d_{ij^*} 
 + \sum_{j^*\in J^{\prime}}\sum_{i\in I_{j^*}}\sum_{j^{\prime}\in J^{\prime}\setminus\{j^*\}} \underset{j\in J\cup\{j^{\prime}\}}{\textrm{max}}\; d_{ij} \\
 &\ge \sum_{j^*\in J^{\prime}}\sum_{i\in I_{j^*}} d_{ij^*}
 +  (|J^{\prime}|-1) \sum_{j^*\in J^{\prime}}\sum_{i\in I_{j^*}} \underset{j\in J}{\textrm{max}}\; d_{ij}.
\end{aligned}
\end{equation}
Substituting \eqref{eqn:T1} and \eqref{eqn:T2} into \eqref{eqn:valid-ineq-R2} yields the following inequalities:
\begin{equation}\label{eqn:R2-final}
\begin{aligned}
&\sum_{j^{\prime}\in J^{\prime}}\sum_{j^*\in J\cup J^\prime}\sum_{i\in I_{j^*}} \underset{j\in J\cup\{j^{\prime}\}}{\textrm{max}}\; d_{ij}
\ge |J^\prime|\sum_{j^*\in J}\sum_{i\in I_{j^*}}\underset{j\in J}{\textrm{max}}\; d_{ij} 
+ \sum_{j^*\in J^{\prime}}\sum_{i\in I_{j^*}} d_{ij^*} \\
&\qquad + (|J^{\prime}|-1) \sum_{j^*\in J^{\prime}}\sum_{i\in I_{j^*}} \underset{j\in J}{\textrm{max}}\; d_{ij} \\
&=\sum_{j^*\in J}\sum_{i\in I_{j^*}}\underset{j\in J}{\textrm{max}}\; d_{ij}+\sum_{j^*\in J^{\prime}}\sum_{i\in I_{j^*}} d_{ij^*}
+(|J^{\prime}|-1) \sum_{j^*\in J\cup J^{\prime}}\sum_{i\in I_{j^*}} \underset{j\in J}{\textrm{max}}\; d_{ij} \\
&=\sum_{j^*\in J}\sum_{i\in I_{j^*}}\underset{j\in J}{\textrm{max}}\; d_{ij}+\sum_{j^*\in J^{\prime}}\sum_{i\in I_{j^*}} d_{ij^*}
+(|J^{\prime}|-1) \sum_{i\in I} \underset{j\in J}{\textrm{max}}\; d_{ij},
\end{aligned}
\end{equation}
where we use the fact that $\Set*{I_{j^*}}{j^*\in J\cup J^\prime}$ form a partition of $I$. 
Substituting \eqref{eqn:R2-final} into \eqref{eqn:second-term-in-ineq} yields
\beq\label{eqn:valid-ineq-R1-final}
\ba
&\sum_{j^{\prime}\in[n]\setminus J}\Big(\sum_{i\in I}\underset{j\in J\cup\{j^{\prime}\}}{\textrm{max}}\;d_{ij} -\sum_{j\in J} c_j\Big)y_{j^{\prime}}
\ge \sum_{j^*\in J}\sum_{i\in I_{j^*}}\underset{j\in J}{\textrm{max}}\; d_{ij}+\sum_{j^*\in J^{\prime}}\sum_{i\in I_{j^*}} d_{ij^*} \\
&\qquad\qquad +(|J^{\prime}|-1) \sum_{i\in I} \underset{j\in J}{\textrm{max}}\; d_{ij}
-|J^\prime|\sum_{j\in J}c_j \\
&\ge \sum_{j^*\in J}\sum_{i\in I_{j^*}}\underset{j\in J}{\textrm{max}}\; d_{ij}
+\sum_{j^*\in J^{\prime}}\sum_{i\in I_{j^*}} d_{ij^*}-\sum_{j\in J}c_j
\ea
\eeq
where we use the assumption $\sum_{i\in I}\textrm{max}_{j\in J}d_{ij}-\sum_{j\in J}c_j\ge 0$.
Substituting \eqref{eqn:valid-ineq-R1} and \eqref{eqn:valid-ineq-R1-final}  into \eqref{eqn:valid-ineq-LHS} yields
\begin{equation}
\sum_{i\in I}\sum_{j\in[n]}x_{ij}
-\sum_{j^{\prime}\in[n]\setminus J}\left(\sum_{i\in I}\underset{j\in J\cup\{j^{\prime}\}}{\textrm{max}}\; d_{ij} -\sum_{j\in J} c_j\right)y_{j^{\prime}}
\le \sum_{j\in J} c_j.
\end{equation}
This shows that \eqref{eqn:Z-valid-ineq-2} is valid for $\cchiL$.
\end{proof}

\begin{theorem}\label{thm:facet-critical-IJ}
Let $I\subseteq[m]$ and $J\subseteq[n]$.
Suppose the following conditions are satisfied
\begin{equation}\label{eqn:facet-cond}
\ba
&\sum_{j\in J}c_j>\sum_{i\in I}\max_{j\in J}d_{ij},\qquad \max_{j\in[n]}d_{ij}=\max_{j\in J}d_{ij}\quad\forall i\in I, \\
&\sum_{j\in J\setminus\{j_0\}}c_j<\sum_{i\in I}\max_{j\in J\setminus\{j_0\}}d_{ij} \qquad\forall j_0\in J.
\ea
\end{equation}
Then the following inequality defines a facet of $\cchiL$
\begin{equation}\label{eqn:facet-def-ineq}
\sum_{i\in I}\sum_{j\in J}x_{ij} 
+ \sum_{j\in J}\Big(\sum_{j^{\prime}\in J\setminus\{j\}}c_{j^{\prime}} 
-\sum_{i\in I}\max_{j^{\prime}\in J}d_{ij^\prime}\Big) y_j
\le (|J|-1)\Big(\sum_{j\in J}c_j-\sum_{i\in I}\max_{j^{\prime}\in J}d_{ij^\prime}\Big).
\end{equation}
\end{theorem}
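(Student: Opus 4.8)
The plan is to argue in three stages: (i) show that \eqref{eqn:facet-def-ineq} is valid for $\cchiL$; (ii) exhibit $(2|I|+1)|J|$ affinely independent points of $\CIJ$ lying on the hyperplane of \eqref{eqn:facet-def-ineq}, which makes it a facet of $\CIJ$; and (iii) verify that this point set meets conditions (1)--(3) of Theorem~\ref{thm:facet-extension} and invoke that theorem. Throughout write $C=\sum_{j\in J}c_j$ and $D=\sum_{i\in I}\max_{j\in J}d_{ij}$, so that \eqref{eqn:facet-cond} reads $C>D$, $\max_{j\in[n]}d_{ij}=\max_{j\in J}d_{ij}$ for $i\in I$, and $C-c_{j_0}<\sum_{i\in I}\max_{j\in J\setminus\{j_0\}}d_{ij}$ for every $j_0\in J$; the last condition forces $|J|\ge2$ and $|I|\ge1$. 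Note that \eqref{eqn:facet-def-ineq} has the form \eqref{eqn:gen-facet} with $\beta_{ij}=0$, and that the coefficient of $y_j$ in it equals $C-c_j-D$.

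For (i), take $[x,y,q]\in\chi^L$, put $S=\{j\in J:y_j=1\}$ and $t=|S|$; rearranging \eqref{eqn:facet-def-ineq} shows validity amounts to
\beq\label{eqn:plan-reduced}
\sum_{i\in I}\sum_{j\in J}x_{ij}\le(|J|-1-t)(C-D)+\sum_{j\in S}c_j.
\eeq
If $t\le|J|-1$ then $(|J|-1-t)(C-D)\ge0$ by $C>D$, and the capacity constraints give $\sum_{i\in I}\sum_{j\in J}x_{ij}\le\sum_{j\in J}c_jy_j=\sum_{j\in S}c_j$, so \eqref{eqn:plan-reduced} holds. If $t=|J|$ the right-hand side of \eqref{eqn:plan-reduced} equals $C-(C-D)=D$, and the demand constraints together with $\sum_{j\in[n]}q_{ij}\le1$ and $\max_{j\in[n]}d_{ij}=\max_{j\in J}d_{ij}$ give $\sum_{j\in J}x_{ij}\le\sum_{j\in[n]}d_{ij}q_{ij}\le\max_{j\in J}d_{ij}$ for each $i\in I$, whence $\sum_{i\in I}\sum_{j\in J}x_{ij}\le D$. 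Validity for $\cchiL$ follows immediately. The same analysis shows that, within $\CIJ$, the hyperplane is attained exactly by points with either $S=J$ and all demand constraints $\sum_{j\in J}x_{ij}=\max_{j\in J}d_{ij}$ tight, or $S=J\setminus\{j_0\}$ and all capacities $\sum_{i\in I}x_{ij}=c_j$ ($j\in S$) tight.

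For (ii), relabel $I=[|I|]$ and $J=[|J|]$. For each $k\in J$ I would build a point $P^{(k)}\in\CIJ$ on the hyperplane with $y_j=1$ for $j\ne k$ and $y_k=0$: setting $D_i^{(k)}=\max_{j\in J\setminus\{k\}}d_{ij}$, fix $\delta\in(0,1)$ with $C-c_k<(1-\delta)\sum_{i\in I}D_i^{(k)}$ (possible by the third hypothesis), pick a flow $(x_{ij})$ with $\sum_{i\in I}x_{ij}=c_j$ for $j\ne k$, $\sum_{j\ne k}x_{ij}<(1-\delta)D_i^{(k)}$ strictly for each $i$, and $x_{ij}>0$ for all $i$ and all $j\ne k$ (such a flow exists on the complete bipartite graph $I\times(J\setminus\{k\})$ because the total supply $C-c_k$ is strictly below the total capacity; strict positivity and strict row slack are obtained from any feasible flow by redistributing mass to empty cells and from tight to slack rows, using $c_j>0$ and the positive total slack), set $x_{ik}=0$, $q_{i\sigma_k(i)}=1-\delta$ for $\sigma_k(i)\in\mathrm{argmax}_{j\ne k}d_{ij}$, and all other $q$-entries zero; tightness of the capacities on $J\setminus\{k\}$ puts $P^{(k)}$ on the hyperplane, and $\sum_jq_{ij}=1-\delta<1$. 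I would also build $P^*\in\CIJ$ with $y\equiv1$ on $J$, $x_{ij}=(\max_{j'\in J}d_{ij'})\,c_j/C$, and $q_{ij_i^*}=1$ for $j_i^*\in\mathrm{argmax}_{j\in J}d_{ij}$, all other $q$-entries zero; then all demand constraints are tight, all capacities strict ($\sum_{i\in I}x_{ij}=c_jD/C<c_j$), $x_{ij}>0$, and $P^*$ lies on the hyperplane. Now apply Lemma~\ref{lem:lin-indpt} with $l=m=|I|$, $n=|J|$, the linearly independent binary vectors $w^k=1^{|J|}-e^{|J|}_k$ (so $J_k=J\setminus\{k\}$), $(u^k,v^k)$ the $(x,q)$-part of $P^{(k)}$, and $(u',v')$ the $(x,q)$-part of $P^*$: it delivers $(2|I|+1)|J|$ affinely independent vectors, the set $V:=U^*\cup V^*\cup Y^*\cup W$ of its proof, each of which is some $P^{(k)}$, or some $P^{(\pi(j))}$ perturbed within an open column by $[\epsilon e_{ij},0,0]-[\epsilon e_{i+1,j},0,0]$ or by $[0,\epsilon e_{ij},0]$, or $P^*$ perturbed by $[\epsilon e_{1j},0,0]-[\epsilon e_{1,j+1},0,0]$. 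Every such perturbation leaves the $y$-coordinates and $\sum_{i\in I}\sum_{j\in J}x_{ij}$ unchanged, so all points of $V$ stay on the hyperplane; and since $P^{(\pi(j))}$ has strictly positive $x$-entries in its open columns, strictly slack demand constraints, and $\sum_jq_{ij}<1$, while $P^*$ has strictly positive $x$-entries and strictly slack capacities, all points of $V$ are feasible provided $\epsilon$ is smaller than every relevant slack. As $\dim\CIJ=(2|I|+1)|J|$ and $[0,0,0]\in\CIJ$ violates \eqref{eqn:facet-def-ineq} strictly (its right-hand side $(|J|-1)(C-D)$ is positive), $V$ shows that \eqref{eqn:facet-def-ineq} is a facet of $\CIJ$.

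For (iii), $V$ meets the hypotheses of Theorem~\ref{thm:facet-extension}: every point of $V$ has $y_j\in\{0,1\}$ for $j\in J$ (condition (1)); every point of $Y^*$ has $y_j=1$ and $\sum_{i\in I}x_{ij}<c_j$ for all $j\in J$ (condition (2)); and each $P^{(k)}\in W$ has $\sum_{j\in J}q_{ij}=1-\delta<1$ for all $i\in I$ (condition (3)). Since \eqref{eqn:facet-def-ineq} is valid for $\cchiL$ by (i) and of the form \eqref{eqn:gen-facet}, Theorem~\ref{thm:facet-extension} then yields that it is a facet of $\cchiL$. The main obstacle is the bookkeeping of step (ii): arranging the base points $P^{(k)}$ to have strict row slack, strict positivity of all $x$-entries in their open columns, and $\sum_jq_{ij}<1$ at once --- which is exactly where the three parts of \eqref{eqn:facet-cond} and the positivity of the $d_{ij}$ and $c_j$ all enter --- and then confirming that each of the $(2|I|+1)|J|$ perturbed points produced by Lemma~\ref{lem:lin-indpt} is genuinely feasible for small $\epsilon$.
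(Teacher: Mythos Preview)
Your argument is correct, and it actually takes a different route from the paper in two respects.

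For validity, the paper splits into four cases (all $y_j=1$ on $J$; exactly one $y_j=1$; an intermediate $J_0$ of zeros; all $y_j=0$) and handles the intermediate case by an explicit chain of inequalities. Your rearrangement into the single inequality \eqref{eqn:plan-reduced} and the dichotomy $t\le|J|-1$ (capacity-dominated) versus $t=|J|$ (demand-dominated) is cleaner and makes transparent why only the first and third hypotheses in \eqref{eqn:facet-cond} are needed for validity. One wording slip: at the end of (ii) you say $[0,0,0]$ ``violates'' \eqref{eqn:facet-def-ineq}; you mean it satisfies it strictly, which is what shows the face is proper.

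For the facet-of-$\CIJ$ step, the paper uses a uniqueness argument: it posits a generic facet \eqref{eqn:gen-facet-ZIJ} containing the tight set, then plugs in perturbed points to force $\alpha_{ij}\equiv\alpha$, $\gamma_{ij}\equiv0$, and finally solves the linear system \eqref{eqn:lin-sys-coef} to identify the remaining coefficients. You instead construct the $(2|I|+1)|J|$ affinely independent tight points directly, by building the base points $P^{(k)}$ and $P^{*}$ with strict interior slack and invoking Lemma~\ref{lem:lin-indpt}. This is more explicit---indeed it is precisely the machinery the paper sets up in Section~\ref{sec:lemmas} but then does not use in its own proof of Theorem~\ref{thm:facet-critical-IJ}---at the cost of the bookkeeping you flag (verifying each perturbed point stays feasible). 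Your flow construction for $P^{(k)}$ can be made fully explicit by taking $x_{ij}=c_j D_i^{(k)}/\sum_{i'}D_{i'}^{(k)}$, which gives exact column sums, strictly slack rows, and positive entries in one stroke; this removes the hand-wave about ``redistributing mass.'' Both approaches then appeal to Theorem~\ref{thm:facet-extension} identically for the lift to $\cchiL$.
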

\begin{proof}
We will first provide an intuition of how the coefficients of the inequality \eqref{eqn:facet-def-ineq}
are determined given the condition in \eqref{eqn:facet-cond}. Since we restrict the decision
variables induced by the subsets $I$ and $J$, a facet-defining valid inequality is likely in the form of
\beq
\sum_{i\in I}\sum_{j\in J}x_{ij}+\sum_{j\in J}a_jy_j \le b.
\eeq 
We need to determine the coefficients $a_j$ and $b$. Notice that
there are $|J|+1$ coefficients to be determined, and we will set up
$|J|+1$ linear equations corresponding to the following $|J|+1$ situations:
(1) opening service centers at all locations in the subset $J$,
(2) opening service centers at all locations in $J\setminus\{j\}$ for all $j\in J$. 
Notice that (2) contains $|J|$ different situations referred as situation $(2.j)$.
In the first situation, all locations in $J$ have service centers. So the maximum
value of $\sum_{i\in I}\sum_{j\in J}x_{ij}$ is $\sum_{i\in I}\max_{j\in J}d_{ij}$, i.e., 
the aggregated demand constraint is binding.
Similarly, in the situation $(2.j_0)$,  locations in $J\setminus\{j_0\}$ have service centers,
and the maximum value of $\sum_{i\in I}\sum_{j\in J}x_{ij}$ is 
$\sum_{j\in J\setminus\{j_0\}}c_j$,
i.e., the aggregated capacity constraint is binding. These observations
help us set up the following linear equations to determine $a_j$ and $b$:
\begin{subequations}\label{eqn:lin-sys-coef}
\begin{align}
&\sum_{i\in I}\max_{j\in J}d_{ij}+\sum_{j\in J}a_j = b, \label{eqn:D+a=b} \\
&\sum_{j^{\prime}\in J\setminus\{j\}}c_{j^\prime}
+ \sum_{j^{\prime}\in J\setminus\{j\}}a_{j^\prime} = b \qquad\forall j\in J. \label{eqn:C+a=b}
\end{align}
\end{subequations}
By solving the above linear equation system, we can determine the values
of all coefficients:
\bdm
\ba
&b=(|J|-1)\Big(\sum_{j\in J}c_j-\sum_{i\in I}\max_{j^{\prime}\in J}d_{ij^\prime}\Big), \\
&a_j=\sum_{j^{\prime}\in J\setminus\{j\}}c_{j^{\prime}} 
-\sum_{i\in I}\max_{j^{\prime}\in J}d_{ij^\prime} \qquad\forall j\in J,
\ea
\edm
which leads to the inequality \eqref{eqn:facet-def-ineq}.

Now we give a formal proof of that \eqref{eqn:facet-def-ineq} 
is a facet-defining valid inequality of $\cchiL$.
To see the validness, we divide the analysis into four cases: 
(a) $y_j=1$ for all $j\in J$;
(b) $y_j=1$ and $y_{j^{\prime}}=0$ for all $j^{\prime}\in J\setminus\{j\}$; 
(c) there exists a $J_0\subset J$ satisfying $2\le |J_0|\le |J|-1$ and
$y_j=1$ for all $j\in J\setminus J_0$ and $y_j=0$ for all $j\in J_0$;
(d) $y_j=0$ for all $j\in J$. The validness of \eqref{eqn:facet-def-ineq} clearly
holds in the cases (a) and (b) by the way of determining the coefficients.
For the case (d), \eqref{eqn:facet-def-ineq} becomes
\bdm
\sum_{i\in I}\sum_{j\in J}x_{ij} 
\le (|J|-1)\Big(\sum_{j\in J}c_j-\sum_{i\in I}\max_{j^{\prime}\in J}d_{ij^\prime}\Big).
\edm
It is valid because in this case we have $\sum_{i\in I}\sum_{j\in J}x_{ij}=0$ 
and \newline
$c_j-\sum_{i\in I}\max_{j^{\prime}\in J}d_{ij^\prime}>0$ by assumption.
It remains to verify the validness in the case (c). Substitute $y_j=1$ for $j\in J\setminus J_0$
into \eqref{eqn:facet-def-ineq}, and notice that we have
\bdm
\ba
&(|J|-1)\Big(\sum_{j\in J}c_j-\sum_{i\in I}\max_{j^{\prime}\in J}d_{ij^\prime}\Big)
-\sum_{j\in J\setminus J_0}\Big(\sum_{j^{\prime}\in J\setminus\{j\}}c_{j^{\prime}} 
-\sum_{i\in I}\max_{j^{\prime}\in J}d_{ij^\prime}\Big) \\
&=\sum_{j_0\in J_0}\sum_{j\in J\setminus\{j_0\}}c_j-(|J_0|-1)\sum_{i\in I}\max_{j^{\prime}\in J}d_{ij^\prime}\\
&\ge\sum_{j_0\in J_0}\sum_{j\in J\setminus\{j_0\}}c_j-(|J_0|-1)\sum_{j\in J}c_j \\
&=(|J_0|-1)\sum_{j\in J_0}c_j + |J_0|\sum_{j\in J\setminus J_0}c_j - (|J_0|-1)\sum_{j\in J}c_j \\
&= \sum_{j\in J\setminus J_0}c_j \ge \sum_{i\in I}\sum_{j\in J}x_{ij},
\ea
\edm
where the assumed condition is used to get the first inequality above.
It proves the validness in this case.

To show that \eqref{eqn:facet-def-ineq} defines a facet of $\cchiL$,
we will first show that it defines a facet of $\CIJ$ (see \eqref{def:ZIJ}),
and apply Theorem~\ref{thm:facet-extension} to conclude that it also
defines a facet of $\cchiL$ via verifying the three conditions 
in Theorem~\ref{thm:facet-extension} are satisfied. Let 
$V=\Set*{[x,y,q]\in\CIJ}{[x,y,q]\textrm{ satisfies \eqref{eqn:facet-def-ineq} as equality}}$.
Clearly, there exists a facet of $\CIJ$ containing all points in $V$,
and suppose this facet is represented in the following general form:
\beq\label{eqn:gen-facet-ZIJ}
\sum_{i\in I}\sum_{j\in J}\alpha_{ij}x_{ij} + \sum_{j\in J}\beta_jy_j
+\sum_{i\in I}\sum_{j\in J}\gamma_{ij}q_{ij}\le \lambda.
\eeq
It suffices to show that the above coefficients are different from the
coefficients in \eqref{eqn:facet-def-ineq} by a positive constant.
Note that all points in $V$ should satisfy \eqref{eqn:gen-facet-ZIJ}
as an equality. We are going to propose some specific points from $V$
and plug them into \eqref{eqn:gen-facet-ZIJ} to determine the coefficients.
First, we notice that there must exist a vector $[\sum_{j\in J}x^0,e_j,q^0]\in V$
such that $x^0_{ij}>0$, $0<q^0_{ij}<1$ for all $(i,j)\in I\times J$, 
and $\sum_{i\in I}x^0_{ij}<c_j$ for all
$j\in J$. It can be deduced that the points
$[x^0+\epsilon e_{ij_1}-\epsilon e_{ij_2}, \sum_{j\in J}e_j, q^0]$ 
for all $i\in I,\;j_1,j_2\in J,\; j_1\neq j_2$, and 
$[\sum_{j\in J}e_j,x^0+\epsilon e_{i_1j}-\epsilon e_{i_2j},q^0]$
for all $i_1,i_2\in I,\; j\in J$ are also in $V$ for some sufficient
small $\epsilon$. Substituting these points into \eqref{eqn:gen-facet-ZIJ}
as an equality, we conclude that $\alpha_{ij_1}=\alpha_{ij_2}$
and $\alpha_{i_1j}=\alpha_{i_2j}$, which implies that 
$\alpha_{ij}=\alpha\;\forall (i,j)\in I\times J$ for some $\alpha>0$.
Without loss of generality, we can set $\alpha=1$ since the inequality
is invariant under multiplication of a positive constant.
Similarly, the exists a point $[x^1,\sum_{j\in J\setminus\{j_0\}}e_j,q^1]\in V$
such that $0<q^1_{ij}<1$ for all $(i,j)\in I\times(J\setminus\{j_0\})$.
It can be deduced that the points 
$[x^1,\sum_{j\in J\setminus\{j_0\}}e_j,q^1+\epsilon e_{i_1j_1}]\;\forall(i_1,j_1)\in I\times(J\setminus\{j_0\})$
are all in $V$.
Substituting this point into \eqref{eqn:gen-facet-ZIJ} as an equality,
we conclude that $\gamma_{ij}=0$ for all $(i,j)\in I\times(J\setminus\{j_0\})$.
Since $j_0$ is arbitrary, we conclude that $\gamma_{ij}=0$ for all $(i,j)\in I\times J$.
Therefore, \eqref{eqn:gen-facet-ZIJ} becomes
\beq\label{eqn:gen-facet-ZIJ-simp}
\sum_{i\in I}\sum_{j\in J}x_{ij} + \sum_{j\in J}\beta_jy_j\le \lambda.
\eeq
Clearly, since the previously considered point $[x^0,\sum_{j\in J}e_j,q^0]\in V$
should satisfy $\sum_{i\in I}\sum_{j\in J}x^0_j=\sum_{i\in I}\max_{j\in J}d_{ij}$
(this can be obtained by substituting the point into \eqref{eqn:facet-def-ineq} as an equality).
Then substituting this point into \eqref{eqn:gen-facet-ZIJ-simp} as an equality
gives an equation in the same form of \eqref{eqn:D+a=b}.
Similarly, substituting the point $[x^1,\sum_{j\in J\setminus\{j_0\}}e_j,q^1]\in V$
into \eqref{eqn:gen-facet-ZIJ-simp} as an equality gives an equation in the same
form of \eqref{eqn:C+a=b}. Therefore the to be determined coefficients $[\beta,\gamma]$
are the solution of \eqref{eqn:lin-sys-coef}, which shows that the facet defining valid inequality
\eqref{eqn:gen-facet-ZIJ-simp} of $\CIJ$ is exactly \eqref{eqn:facet-def-ineq}.

Now we show that \eqref{eqn:facet-def-ineq} is also a facet defining inequality 
of $\cchiL$ using Theorem~\ref{thm:facet-extension}. 
To achieve this, we first need to show that \eqref{eqn:facet-def-ineq}
is valid for $\cchiL$. The validness of \eqref{eqn:facet-def-ineq} for $\cchiL$
is based on the following two observations: if a subset 
$J^{\prime}\subseteq[n]\setminus J$ of service centers are opened,
the maximum attracted demand from a customer site $i$ 
is at most $\max_{j\in J\cup J^\prime}d_{ij}\le\max_{j\in[n]}d_{ij}$.
But $\max_{j\in[n]}d_{ij}=\max_{j\in J}d_{ij}$ by assumption, which
implies that once all places in $J$ have service centers, the aggregated demand
constraint $\sum_{i\in I}\sum_{j\in J}x_{ij}\le\sum_{i\in I}\max_{j\in J}d_{ij}$
is valid and it is independent of whether or not other places having service centers. 
Similarly, in the case that places at $J\setminus\{j_0\}$ have service centers,
the aggregated capacity constraint dominates the aggregated demand constraint
which is also independent of service center locations at other places due to the fact
$\max_{j\in J\setminus\{j_0\}}d_{ij}\le\max_{j\in J\cup J^{\prime}\setminus\{j_0\}}d_{ij}$
for any $J^{\prime}\subseteq[n]\setminus J$. It remains to 
show that there exist $(2|I|+1)|J|$ affinely independent vectors in $\CIJ$ 
that satisfy the three conditions in Theorem~\ref{thm:facet-extension}.
Since we have already proved that \eqref{eqn:facet-def-ineq}
is a facet defining inequality of $\CIJ$, there must exist a set $V$ of
$(2|I|+1)|J|$ affinely independent vectors in $\CIJ$ that 
satisfy \eqref{eqn:facet-def-ineq} as an equality.
Furthermore, we can make sure that $V$ includes the following two vectors:
$[x^0,\sum_{j\in J}e_j, q^0]$ and $[x^1,\sum_{j\in J\setminus\{j_0\}}e_j, q^1]$
where the two vectors are introduced in the previous analysis of this proof.
Notice that $[x^0,\sum_{j\in J}e_j, q^0]$ satisfies the condition (2) 
of Theorem~\ref{thm:facet-extension} for all $j\in J$, and
$[ x^1,\sum_{j\in J\setminus\{j_0\}}e_j, q^1]$ satisfies the condition (3) 
of Theorem~\ref{thm:facet-extension} for each $i\in I$.
This concludes the proof.
\end{proof}

Theorem~\ref{thm:valid-ineq-Z} and \ref{thm:facet-critical-IJ} provide
valid and facet-defining inequalities for $\conv(\chi^L)$. Notice that these 
inequalities do not involve variables $q$ introduced to lift $\chi$. 
This observation raises a question: are these inequalities also valid and facet-defining for $\conv(\chi)$?
The following proposition gives an affirmative answer to this question.
\begin{proposition}
Let $[x,y]\in\mbR^{m+n}$, $P$ be a full dimensional polytope in $\mbR^m$ represented
by a linear-inequality system of variables $x$, and $Q$ be a full dimensional polytope in $\mbR^{m+n}$
represented by a linear-inequality system of variables $x,y$. Suppose $P=\emph{proj}_x Q$.
If $\alpha^\top x+\beta\le 0$ is a valid (facet-defining) inequality of $Q$, it is also a valid (facet-defining)
inequality of $P$.
\end{proposition}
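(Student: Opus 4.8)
The plan is to treat the two claims separately: validity transfers trivially, and the facet statement reduces to a single dimension count. \textbf{Validity.} For any $x\in P=\proj_x Q$ there is a $y$ with $(x,y)\in Q$, and applying the inequality, valid on $Q$, to $(x,y)$ yields $\alpha^\top x+\beta\le 0$; hence it is valid for $P$. In particular, in the facet case the inequality is at least valid for $P$, so it exposes a genuine face of $P$.

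\textbf{Facet.} Assume $\alpha^\top x+\beta\le 0$ is facet-defining for $Q$, and set $H=\{x:\alpha^\top x+\beta=0\}$, which makes sense in both $\mbR^m$ and $\mbR^{m+n}$ since it constrains only $x$. Let $F_Q=Q\cap H$ (a facet of $Q$) and $F_P=P\cap H$ (the face of $P$ exposed by the inequality). First I would check the identity $F_P=\proj_x F_Q$: an $x$ belongs to $\proj_x F_Q$ iff $x\in\proj_x Q=P$ and $\alpha^\top x+\beta=0$, because the equation $\alpha^\top x+\beta=0$ places no restriction on $y$. Since a facet is a proper face, there is $(x^{\ast},y^{\ast})\in Q$ with $\alpha^\top x^{\ast}+\beta<0$; its projection lies in $P$, so $H$ does not contain $P$, and therefore $F_P$ is a proper face of $P$, giving $\dim F_P\le\dim P-1=m-1$.

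The only substantive step is the reverse bound $\dim F_P\ge m-1$. As $Q$ is full dimensional, $\dim F_Q=(m+n)-1$, so $F_Q$ contains $m+n$ affinely independent points $v^0,\dots,v^{m+n-1}$, and the differences $v^k-v^0$, $k=1,\dots,m+n-1$, are linearly independent in $\mbR^{m+n}$. The kernel of $\proj_x$ is the $n$-dimensional $y$-coordinate subspace, so by the rank-nullity relation the vectors $\proj_x(v^k-v^0)$ span a subspace of dimension at least $(m+n-1)-n=m-1$; this is the same linear-algebra bookkeeping that underlies Lemmas~\ref{lem:n+1->n} and~\ref{lem:aff-indpt}. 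Hence the projected points $\proj_x v^0,\dots,\proj_x v^{m+n-1}\in\proj_x F_Q=F_P$ include $m$ affinely independent ones, so $\dim F_P\ge m-1$. Combining the bounds, $\dim F_P=m-1=\dim P-1$, and since $F_P$ is the face of $P$ exposed by $\alpha^\top x+\beta\le 0$, the inequality is facet-defining for $P$.

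The hard part is precisely this lower bound: one has to be sure that projecting out the $n$ variables $y$ cannot lower $\dim F_Q$ by more than $n$, which is the rank-nullity estimate above and deserves to be written out rather than asserted. All the remaining pieces — validity, the identity $F_P=\proj_x F_Q$, the properness of $F_P$, and the upper bound $\dim F_P\le m-1$ — are routine, and I would also note in passing that the full-dimensionality of $P$ is in fact forced by that of $Q$, since a coordinate projection of a full-dimensional polytope is full dimensional.
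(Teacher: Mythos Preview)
Your proof is correct and follows essentially the same approach as the paper: both argue that validity transfers by lifting any $x\in P$ to $(x,y)\in Q$, and for the facet claim both take the $m+n$ independent points on $F_Q$ and observe that projecting out the $n$-dimensional $y$-block can lower the dimension by at most $n$, leaving $m$ independent points on $F_P$. Your write-up is in fact cleaner than the paper's: you work with affine independence throughout (avoiding the paper's ad hoc restriction to the case $\beta\neq 0$), and you explicitly verify the identifications $F_P=\proj_x F_Q$ and the properness of $F_P$, which the paper leaves implicit.
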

\begin{proof}
(1) Suppose $\alpha^\top x+\beta\le 0$ is a valid inequality of $Q$. 
Due to the projection relationship, for any $x^\prime\in P$,
there exists a $[x^\prime,y^\prime]\in Q$. Since $\alpha^\top x+\beta\le 0$
is valid for $Q$, we have $\alpha^\top x^\prime+\beta\le 0$, which implies
that $\alpha^\top x+\beta\le 0$ is valid for $P$. (2) Suppose $\alpha^\top x+\beta\le 0$
is a facet-defining inequality of $Q$. We focus on the case that the facet does not
contain the point $[0,0]$. By definition, there exists $m+n$ linearly independent
points $\{[x^k,y^k]:\;k\in[m+n]\}$ satisfying $\alpha^\top x^k+\beta=0$. Since every $y^k$
is a $n$-dimensional vector, the set $\{x^k:\;k\in[m+n]\}$ contains $m$ linearly independent
vectors. In other words, there exist $m$ linearly independent vectors on the hyperplane 
$\alpha^\top x+\beta=0$, which shows that $\alpha^\top x+\beta\le 0$ defines a facet of $P$.
\end{proof}
\begin{corollary}
The inequality \eqref{eqn:Z-valid-ineq-1} is valid for $\conv(\chi)$, 
the inequality \eqref{eqn:Z-valid-ineq-2} is valid for $\conv(\chi)$ given
that the condition in Theorem~\ref{thm:valid-ineq-Z} is satisfied,
and the inequality \eqref{eqn:facet-def-ineq} defines a facet of $\conv(\chi)$
given that the conditions in Theorem~\ref{thm:facet-critical-IJ} is satisfied.
\end{corollary}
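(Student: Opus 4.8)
The plan is to derive everything from the immediately preceding Proposition, applied with $Q=\conv(\chi^L)$ and $P=\conv(\chi)$, where the retained block of coordinates is the pair $[x,y]$ and the projected-out block is $q$. The three inequalities \eqref{eqn:Z-valid-ineq-1}, \eqref{eqn:Z-valid-ineq-2} and \eqref{eqn:facet-def-ineq} involve only $x$ and $y$, so after moving their right-hand sides to the left they each take the form $\alpha^\top[x,y]+\beta\le 0$ required by that Proposition; and by Proposition~\ref{prop:chi_chiL} we have $\conv(\chi)=\proj_{[x,y]}\conv(\chi^L)$, which is precisely the hypothesis $P=\proj_x Q$ under this relabeling.

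For the two validity statements I would simply combine Theorem~\ref{thm:valid-ineq-Z} with part~(1) of the preceding Proposition: Theorem~\ref{thm:valid-ineq-Z}(a) gives that \eqref{eqn:Z-valid-ineq-1} is valid for $\conv(\chi^L)$, Theorem~\ref{thm:valid-ineq-Z}(b) gives that \eqref{eqn:Z-valid-ineq-2} is valid for $\conv(\chi^L)$ under the stated condition $\sum_{j\in J}c_j\le\sum_{i\in I}\max_{j\in J}d_{ij}$, and the projection transfer of validity in part~(1) needs no dimensional hypothesis, so both inequalities pass to $\conv(\chi)$. For the facet statement I would invoke part~(2) of the Proposition, which additionally needs $P$ and $Q$ full-dimensional and the facet not to pass through the origin. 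Theorem~\ref{thm:facet-critical-IJ}, together with the full-dimensionality of $\conv(\chi^L)$ established inside the proof of Theorem~\ref{thm:facet-extension}, gives that \eqref{eqn:facet-def-ineq} is a facet of $\conv(\chi^L)$; evaluating \eqref{eqn:facet-def-ineq} at $[x,y,q]=[0,0,0]$ yields left-hand side $0$ and right-hand side $(|J|-1)\big(\sum_{j\in J}c_j-\sum_{i\in I}\max_{j'\in J}d_{ij'}\big)$, which is strictly positive by the first condition in \eqref{eqn:facet-cond} (with $|J|\ge 2$, which is implicit in those conditions), so the origin is strictly interior to $\conv(\chi^L)$ and in particular not on the facet. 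The only additional verification is that $\conv(\chi)$ is itself full-dimensional, i.e. $\dim\conv(\chi)=mn+n$; this is shown exactly as the full-dimension argument in the proof of Theorem~\ref{thm:facet-extension}, by exhibiting the $mn+n+1$ affinely independent points $[0,0]$, $[0,e_j]$ for $j\in[n]$, and $[\epsilon e_{ij},e_j]$ for $i\in[m]$, $j\in[n]$ with $0<\epsilon<\min_{i,j}\{c_j,d_{ij}\}$, all of which lie in $\chi$. Part~(2) of the Proposition then delivers that \eqref{eqn:facet-def-ineq} defines a facet of $\conv(\chi)$.

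The only genuine point of care here is the bookkeeping in applying the Proposition: it is stated for a projection that forgets a single coordinate block while keeping another, whereas in our application the kept block is the composite $[x,y]$ and the forgotten block is $q$. This is a harmless relabeling, but it is where one must confirm that the dimensions line up ($\conv(\chi^L)$ of dimension $2mn+n$ in $[x,y,q]$-space, $\conv(\chi)$ of dimension $mn+n$ in $[x,y]$-space) and that the origin-avoidance assumption used inside the Proposition's facet argument is actually met by the specific inequality \eqref{eqn:facet-def-ineq}; both are settled by the two short computations indicated above.
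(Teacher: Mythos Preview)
Your proposal is correct and is exactly the intended route: the paper states this Corollary without proof, treating it as an immediate consequence of the preceding Proposition applied with $Q=\conv(\chi^L)$, $P=\conv(\chi)$ and the projection relation of Proposition~\ref{prop:chi_chiL}. Your additional verifications (full-dimensionality of $\conv(\chi)$ via the points $[0,0]$, $[0,e_j]$, $[\epsilon e_{ij},e_j]$, and that the facet \eqref{eqn:facet-def-ineq} avoids the origin thanks to the first condition in \eqref{eqn:facet-cond} with $|J|\ge 2$) are precisely the hypotheses the Proposition's proof uses but the paper leaves implicit.
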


\section{Conclusion}
The facet-defining inequalities given in Theorem~\ref{thm:valid-ineq-Z} hold 
when the parameters of demand and capacity satisfy a certain condition for a subset
of client locations and candidate facility locations. The same spirit can be used to
construct various conditions that lead to different families of facet-defining inequalities. 
Identifying the subsets that satisfy the conditions is also a NP-hard problem, 
which prevents a systematic application of these inequalities in computation at low cost
of computing power. This indicates that the facet-defining inequalities identified in this work is of 
more theoretical interest for understanding the facility-location polyhedral structure
under competing constraints of capacity and demand.

\section*{Acknowledgement}
\noindent This research was partially supported by the ONR grant N00014-18-1-2097.

\bibliography{reference}
\end{document}